\theoremstyle{plain}
\newtheorem{theorem}{Theorem}[section]
\newtheorem{prop}[theorem]{Proposition}
\newtheorem{lem}[theorem]{Lemma}
\newtheorem{corol}[theorem]{Corollary}
\theoremstyle{definition}
\newtheorem{defi}[theorem]{Definition}
\newtheorem{rmq}[theorem]{Remark}
\newtheorem{exmp}[theorem]{Example}
\def\modg{{\textrm{-}}{\rm{mod}}}
\def\Gr{{\rm{Gr}}}
\def\Tr{{\rm{Tr}}}
\def\dim{{\rm{dim}\,}}
\def\ddim{{\textbf{dim}\,}}
\def\rep{{\rm{rep}_{\k}}}
\def\qrad{{\rm{q.rad}}}
\def\qsoc{{\rm{q.soc}}}
\def\k{{\bf{k}}}
\def\kQ{\k Q}
\def\LHS{\textrm{(LHS)}\,}
\def\RHS{\textrm{(RHS)}\,}
\def\<{\left<}
\def\>{\right>}
\def\d{{\partial}}
\def\ens#1{\left\{ #1 \right\}}
\def\fl{{\longrightarrow}\,}
\def\CC{{\mathcal{C}}}
\def\A{{\mathbb{A}}}
\def\N{{\mathbb{N}}}
\def\P{{\mathbb{P}}}
\def\Z{{\mathbb{Z}}}
\def\Ext{{\rm{Ext}}}
\def\End{{\rm{End}}}
\def\Hom{{\rm{Hom}}}
\def\Ob{{\rm{Ob}}}
\def\den{{\rm{den}\,}}
\def\im{{\rm{im}\,}}
\def\re{{\rm{re}\,}}
\def\Aaffine{\tilde{\mathbb A}}
\def\Daffine{\tilde{\mathbb D}}
\def\Eaffine{\tilde{\mathbb E}}
\def\Ai{\mathbb A_{\infty}}
\def\X#1#2{X_{R_{#1}^{(#2)}}}
\def\Iff{\Leftrightarrow}
\title{Transverse Quiver Grassmannians and Bases in Affine Cluster Algebras}
\author{G. Dupont}
\begin{document}

\begin{abstract}
	Sherman-Zelevinsky and Cerulli constructed canonically positive bases in cluster algebras associated to affine quivers having at most three vertices. Both constructions involve cluster monomials and normalized Chebyshev polynomials of the first kind evaluated at a certain ``imaginary'' element in the cluster algebra. Using this combinatorial description, it is possible to define for any affine quiver $Q$ a set $\mathcal B(Q)$ which is conjectured to be the canonically positive basis of the acyclic cluster algebra $\mathcal A(Q)$.

	In this article, we provide a geometric realization of the elements in $\mathcal B(Q)$ in terms of the representation theory of $Q$. This is done by introducing an analogue of the Caldero-Chapoton cluster character where the usual quiver Grassmannian is replaced by a constructible subset called \emph{transverse quiver Grassmannian}.
\end{abstract}

\maketitle

\setcounter{tocdepth}{1}
\tableofcontents

\section{Introduction}
	Cluster algebras were introduced by Fomin and Zelevinsky in order to define a combinatorial framework for studying positivity in algebraic groups and canonical bases in quantum groups \cite{cluster1,cluster2,cluster3,cluster4}. Since then, cluster algebras found applications in various areas of mathematics like Lie theory, combinatorics, Teichmüller theory, Poisson geometry or quiver representations.

	A (coefficient-free) cluster algebra $\mathcal A$ is a commutative $\Z$-algebra equipped with a distinguished set of generators, called \emph{cluster variables}, gathered into possibly overlapping sets of fixed cardinality, called \emph{clusters}. Monomials in variables belonging all to the same cluster are called \emph{cluster monomials}. According to the so-called \emph{Laurent phenomenon} \cite{cluster1}, it is known that $\mathcal A$ is a subalgebra of $\Z[\textbf c^{\pm 1}]$ for any cluster $\textbf c$ in $\mathcal A$. A non-zero element $y \in \mathcal A$ is called \emph{positive} if $y$ belongs to $\Z_{\geq 0}[\textbf c^{\pm 1}]$ for any cluster $\textbf c$ in $\mathcal A$. Following \cite{Cerulli:A21}, a $\Z$-basis $\mathcal B \subset \mathcal A$ is called \emph{canonically positive} if the semi-ring of positive elements in $\mathcal A$ coincides with the set of $\Z_{\geq 0}$-linear combinations of elements of $\mathcal B$. Note that if such a basis exists, it is unique.

	The problems of both the existence and the description of a canonically positive basis in an arbitrary cluster algebra are still widely open. Both problems were first solved in the particular case of cluster algebras of finite type $\A_2$ and affine type $\Aaffine_{1,1}$ by Sherman and Zelevinsky \cite{shermanz}. It was later extended by Cerulli for cluster algebras of affine type $\Aaffine_{2,1}$ \cite{Cerulli:A21}. To the best of the author's knowledge, these are the only known constructions of canonically positive bases in cluster algebras.

	Using categorifications of acyclic cluster algebras with cluster categories and cluster characters, it is possible to rephrase Sherman-Zelevinsky and Cerulli constructions in order to place them into the more general context of acyclic cluster algebras associated to arbitrary affine quivers. 

	If $Q$ is an acyclic quiver and $\textbf u$ is a $Q_0$-tuple of indeterminates over $\Z$, we denote by $\mathcal A(Q)$ the acyclic cluster algebra with initial seed $(Q,\textbf u)$. We denote by $\CC_Q$ the associated \emph{cluster category} (over the field $\k$ of complex numbers) and by $X_?:\Ob(\CC_Q) \fl \Z[\textbf u^{\pm 1}]$ the \emph{Caldero-Chapoton map} on $\CC_Q$, also called (canonical) \emph{cluster character} (see Section \ref{section:background} for details). When $Q$ is an affine quiver with positive minimal imaginary root $\delta$, we set 
	$$\mathcal B(Q)=\mathcal M(Q) \sqcup \ens{F_n(X_\delta)X_R|n \geq 1, \, R \textrm{ is a regular rigid $\k Q$-module}}$$
	where $\mathcal M(Q)$ denotes the set of cluster monomials in $\mathcal A(Q)$, $F_n$ denotes the $n$-th \emph{normalized Chebyshev polynomials of the first kind} and $X_{\delta}$ is the evaluation of $X_?$ at any quasi-simple module in a homogeneous tube of the Auslander-Reiten quiver $\Gamma(\kQ\modg)$ of $\kQ$-mod. 

	If $Q$ is of type $\Aaffine_{1,1}$ (respectively $\Aaffine_{2,1}$), the set $\mathcal B(Q)$ coincides with the canonically positive basis constructed in \cite{shermanz} (respectively \cite{Cerulli:A21}). It was conjectured in \cite[Conjecture 7.10]{Dupont:qChebyshev} that, for any affine quiver $Q$, the set $\mathcal B(Q)$ is the canonically positive basis of $\mathcal A(Q)$. Using the so-called \emph{generic basis}, it is possible to prove that, for any affine quiver $Q$, the set $\mathcal B(Q)$ is a $\Z$-basis in $\mathcal A(Q)$ \cite{Dupont:BaseAaffine,DXX:basesv3}. Nevertheless, it is not known if this basis is the canonically positive basis in general.

	An essential problem for investigating this question is due to the fact that the elements of the form $F_n(X_\delta)X_R$ are defined combinatorially and have not yet been given a representation-theoretic or geometric interpretation. The aim of this article is to provide such an interpretation.

	Extending the idea of Caldero and Chapoton \cite{CC}, for any integrable bundle $\mathcal F$ on $\rep(Q)$ (see Section \ref{section:integrable} for definitions), we define a map $\theta_\mathcal F$, called character associated to $\mathcal F$, from the set of objects in $\CC_Q$ to the ring $\Z[\textbf u^{\pm 1}]$. With this terminology, the Caldero-Chapoton map $X_?$ is the character $\theta_{\Gr}$ where $\Gr:M \mapsto \Gr(M)$ denotes the integrable bundle of quiver Grassmannians.

	For any indecomposable $\kQ$-module $M$, we introduce a constructible subset $\Tr(M) \subset \Gr(M)$, called \emph{transverse quiver Grassmannian}. We prove that the bundle $\Tr: M \mapsto \Tr(M)$ is integrable on $\rep(Q)$ and that the elements in $\mathcal B(Q)$ can be described using the associated character $\theta_{\Tr}$. More precisely, we prove that for any $l \geq 1$,
	$$F_l(X_\delta)=\theta_{\Tr}(M)$$
	where $M$ is any indecomposable $\kQ$-module with dimension vector $l \delta$. As opposed to $\theta_{\Gr}$, it turns out that $\theta_{\Tr}$ is independent of the tube containing $M$. In particular, it takes the same values if $M$ belongs to a homogeneous or to an exceptional tube. This is surprising since the usual quiver Grassmannians of two indecomposable modules of dimension $l \delta$ belonging to tubes of different rank are in general completely different.

	Moreover, if $R$ is an indecomposable regular rigid $\kQ$-module, then 
	$$F_l(X_\delta)X_R=\theta_{\Tr}(M)$$
	where $M$ is the unique indecomposable $\kQ$-module of dimension $l\delta+\ddim R$. 

	As a consequence, we obtain the following description of the set $\mathcal B(Q)$~:
	\begin{align*}
		\mathcal B(Q) =
			&\left\{ \theta_{\Tr}(M \oplus R) | M \textrm{ is an indecomposable (or zero) regular $\kQ$-module,}\right.\\
			& \left. \textrm{ $R$ is any rigid object in $\CC_Q$ such that $\Ext^1_{\CC_Q}(M,R)=0$}\right\}.
	\end{align*}

	The paper is organized as follows. In Section \ref{section:background}, we start by recalling several results concerning Chebyshev and generalized Chebyshev polynomials. Then, we recall necessary background on cluster categories and cluster characters associated to acyclic and especially affine quivers. Finally, we recall the known results concerning constructions of bases in affine cluster algebras.

	In Section \ref{section:differences}, we use the combinatorics of generalized Chebyshev polynomials in order to prove relations for cluster characters associated to regular $\kQ$-modules when $Q$ is an affine quiver with minimal imaginary root $\delta$. These relations are generalizations of the so-called \emph{difference property}, introduced previously in \cite{Dupont:BaseAaffine} in order to compute the difference between cluster characters evaluated at indecomposable modules of dimension vector $\delta$ in different tubes.

	Section \ref{section:integrable} introduces the notions of integrable bundles on $\rep(Q)$ and associated characters for any acyclic quiver. In this terminology, the Caldero-Chapoton map is the character associated to the quiver Grassmannian bundle. For affine quivers, we introduce the integrable bundle $\Tr$ of Grassmannian of transverse submodules and see that it coincides with the Caldero-Chapoton map on rigid objects in the cluster category.

	In Section \ref{section:realization}, we prove that the elements in $\mathcal B(Q)$ can be expressed as values of the character $\theta_{\Tr}$ associated to the integrable bundle $\Tr$ of $\rep(Q)$. This provides a geometrization of the set $\mathcal B(Q)$.

	In Section \ref{section:examples}, we illustrate some of our results for quivers of affine types $\Aaffine_{1,1}$ and $\Aaffine_{2,1}$, putting into context the results of \cite{shermanz} and \cite{Cerulli:A21}.

\section{Background, notations and terminology}\label{section:background}
	Given a quiver $Q$, we denote by $Q_0$ its set of arrows and by $Q_1$ its set of vertices. We always assume that $Q_0,Q_1$ are finite sets and that the underlying unoriented graph of $Q$ is connected. A quiver is called \emph{acyclic} if it does not contain any oriented cycles.

	We now fix an acyclic quiver $Q$ and a $Q_0$-tuple $\textbf u=(u_i|i \in Q_0)$ of indeterminates over $\Z$. We denote by $\mathcal A(Q)$ the coefficient-free cluster algebra with initial seed $(Q,\textbf u)$.

	\subsection{Chebyshev polynomials and their generalizations}
		Chebyshev (respectively generalized Chebyshev polynomials) are orthogonal polynomials in one variable (respectively several variables) playing an important role in the context of cluster algebras associated to representation-infinite quivers \cite{shermanz,CZ} (respectively \cite{Dupont:stabletubes,Dupont:qChebyshev}). We recall some basic results concerning these polynomials.
		
		For any $l \geq 0$, the \emph{$l$-th normalized Chebyshev polynomial of the first kind} is the polynomial $F_l$ in $\Z[x]$ defined inductively by
		$$F_0(x)=2, \, F_1(x)=x, $$
		$$F_l(x)=xF_{l-1}(x)-F_{l-2}(x) \textrm{ for any }l \geq 2.$$
		$F_l$ is characterized by the following identity in $\Z[t,t^{-1}]$~:
		$$F_l(t+t^{-1})=t^l+t^{-l}.$$
		These polynomials first appeared in the context of cluster algebras in \cite{shermanz}. 

		For any $l \geq 0$, the \emph{$l$-th (normalized) Chebyshev polynomial of the second kind} is the polynomial $S_l$ in $\Z[x]$ defined inductively by
		$$S_0(x)=1, \, S_1(x)=x, $$
		$$S_l(x)=xS_{l-1}(x)-S_{l-2}(x) \textrm{ for any }l \geq 2.$$
		$S_l$ is characterized by the following identity in $\Z[t,t^{-1}]$~:
		$$S_l(t+t^{-1})=\sum_{k=0}^nt^{n-2k}.$$
		Second kind Chebyshev polynomials first appeared in the context of cluster algebras in \cite{CZ}. For any $l \geq 1$, $S_l(x)$ is the polynomial given by
		$$S_l(x)=\det\left[\begin{array}{rrrrr}
			x & 1 & & & (0)\\
			1 & x & \ddots \\
			& \ddots & \ddots & \ddots \\
			& & \ddots & \ddots & 1\\
			(0) & & & 1 & x
		\end{array}\right]$$
		where the matrix is tridiagonal in $M_l(\Z[x])$.
		First kind and second kind Chebyshev polynomials are related by~:
		$$F_l(x)=S_l(x)-S_{l-2}(x)$$
		for any $l \geq 1$ with the convention that $S_{-1}(x)=0$.

		Fix $\ens{x_i|i\geq 1}$ a family of indeterminates over $\Z$. For any $l \geq 0$, the \emph{$l$-th generalized Chebyshev polynomial} is the polynomial in $\Z[x_1, \ldots, x_l]$ defined inductively by
		$$P_0=1, \, P_1(x_1)=x_1, $$
		$$P_l(x_1, \ldots, x_l)=x_{l}P_{l-1}(x_1,\ldots, x_{l-1})-P_{l-2}(x_1,\ldots, x_{l-2}) \textrm{ for any }l \geq 2.$$
		Equivalently, 
		$$P_l(x_1,\ldots, x_l)=\det\left[\begin{array}{rrrrr}
			x_l & 1 & & & (0)\\
			1 & x_{l-1} & \ddots \\
			& \ddots & \ddots & \ddots \\
			& & \ddots & \ddots & 1\\
			(0) & & & 1 & x_1
		\end{array}\right]$$
		where the matrix is tridiagonal in $M_l(\Z[x_1,\ldots, x_l])$. These polynomials first appeared in the context of cluster algebras in \cite{Dupont:stabletubes} under the name of \emph{generalized Chebyshev polynomials of infinite rank} and similar polynomials also arose in the context of cluster algebras in \cite{YZ:ppalminors,Dupont:qChebyshev}.

	\subsection{Cluster categories and cluster characters}
		Let $\kQ$-mod be the category of finitely generated left-modules over the path algebra $\kQ$ of $Q$. As usual, this category will be identified with the category $\rep(Q)$ of finite dimensional representations of $Q$ over $\k$.
		
		For any vertex $i \in Q_0$, we denote by $S_i$ the simple module associated to $i$, by $P_i$ its projective cover and by $I_i$ its injective hull. We denote by $\<-,-\>$ the homological Euler form defined on $\kQ$-mod by
		$$\<M,N\>=\dim \Hom_{\kQ}(M,N)-\dim \Ext^1_{\kQ}(M,N)$$
		for any two $\kQ$-modules $M,N$. Since $Q$ is acyclic, $\kQ$ is a finite dimensional hereditary algebra so that $\<-,-\>$ is well defined on the Grothendieck group $K_0(\kQ\modg)$. 

		For any $\kQ$-module $M$, the dimension vector of $M$ is
		$$\ddim M=(\dim \Hom_{\kQ}(P_i,M))_{i \in Q_0} \in \N^{Q_0}.$$
		Viewed as a representation of $Q$, $\ddim M=(\dim M(i))_{i \in Q_0}$ where $M(i)$ is the $\k$-vector space at vertex $i$ in the representation $M$ of $Q$. The dimension vector map $\ddim$ induces an isomorphism of abelian groups 
		$$\ddim : K_0(\kQ\modg) \xrightarrow{\sim} \Z^{Q_0}$$
		sending the class of the simple $S_i$ to the $i$-th vector of the canonical basis of $\Z^{Q_0}$. 
		
		The \emph{cluster category} was introduced in \cite{BMRRT} (see also \cite{CCS1} for Dynkin type $\mathbb A$) in order to define a categorical framework for studying the cluster algebra $\mathcal A(Q)$. Let $D^b(\kQ\modg)$ be the bounded derived category of $\kQ$-mod with shift functor $[1]$ and Auslander-Reiten translation $\tau$. The \emph{cluster category} is the orbit category $\CC_Q$ of the auto-functor $\tau^{-1}[1]$ in $D^b(\kQ\modg)$. It is a 2-Calabi-Yau triangulated category. The set of isoclasses of indecomposable objects in $\CC_Q$ can be identified with the union of the set of isoclasses of indecomposable $\kQ$-modules or and the set of isoclasses of shifts of indecomposable projective $\kQ$-modules \cite{K,BMRRT}. In particular, every object $M$ in $\CC_Q$ can be uniquely (up to isomorphism) decomposed into 
		$$M=M_0 \oplus P_M[1]$$
		where $M_0$ is a $\kQ$-module and $P_M$ is a projective $\kQ$-module.

		Given a representation $M$ of $Q$, the \emph{quiver Grassmannian of $M$} is the set $\Gr(M)$ of subrepresentations of $M$. For any element $\textbf e \in \Z^{Q_0}$, the set 
		$$\Gr_{\textbf e}(M)=\ens{N \textrm{ submodule of } M | \ddim N=\textbf e}$$
		is a projective variety. We denote by $\chi(\Gr_{\textbf e}(M))$ its Euler characteristic with respect to the singular cohomology with rational coefficients.

		\begin{defi}[\cite{CC}]
			The \emph{Caldero-Chapoton map} is the map 
			$$X_?:\Ob(\CC_Q) \fl \Z[\textbf u^{\pm 1}]$$ defined by~:
			\begin{enumerate}
				\item[(a)] for any $i \in Q_0$, 
						$$X_{P_i[1]}=u_i~;$$
				\item[(b)] if $M$ is an indecomposable $\kQ$-module, then
					\begin{equation}\label{eq:XM}
						X_M=\sum_{\textbf e \in \N^{Q_0}} \chi(\Gr_{\textbf e}(M))\prod_{i \in Q_0} u_i^{-\<\textbf e, S_i\>-\<S_i,\ddim M-\textbf e\>}~;
					\end{equation}
				\item[(c)] for any two objects $M,N$ in $\CC_Q$, 
						$$X_{M \oplus N}=X_MX_N.$$
			\end{enumerate}
		\end{defi}
		Note that equation (\ref{eq:XM}) also holds for decomposable modules. 

		Caldero and Keller proved that $X_?$ induces a 1-1 correspondence between the set of isoclasses of indecomposable rigid (that is, without self-extensions) objects in $\CC_Q$ and the set of cluster variables in $\mathcal A(Q)$. Moreover, $X_?$ induces a 1-1 correspondence between the set of isoclasses of cluster-tilting objects in $\CC_Q$ and the set of clusters in $\mathcal A(Q)$ \cite[Theorem 4]{CK2}. In particular, we have the following description of cluster monomials in $\mathcal A(Q)$~:
		$$\mathcal M(Q) = \ens{X_M|M \textrm{ is rigid in }\CC_Q}.$$

		For any $\textbf d=(d_i)_{i \in Q_0} \in \Z^{Q_0}$, we set $\textbf u^{\textbf d}=\prod_{i \in Q_0} u_i^{d_i}$. For any Laurent polynomial $L \in \Z[\textbf u^{\pm 1}]$, the \emph{denominator vector} of $L$ it the $Q_0$-tuple $\den(L) \in \Z^{Q_0}$ such that there exists a polynomial $P(u_i|i \in Q_0)$ not divisible by any $u_i$ such that 
		$$L=\frac{P(u_i|i \in Q_0)}{\textbf u^{\den(L)}}.$$
		We define the dimension vector map $\ddim_{\CC_Q}$ on $\CC_Q$ by setting $\ddim_{\CC_Q}M=\ddim M$ if $M$ is a $\kQ$-module and $\ddim_{\CC_Q}P_i[1]=-\ddim S_i$ and extending by additivity. Note that, for any $\kQ$-module $M$, we have $\ddim M=\ddim_{\CC_Q}(M)$ so that, we will abuse notations and write $\ddim M$ for any object in $\CC_Q$.
		Caldero-Keller's denominator theorem \cite[Theorem 3]{CK2} relates the denominator vector of the character with the dimension vector of the corresponding object in the cluster category, namely 
		$$\den(X_M)=\ddim M$$
		for any object $M$ in $\CC_Q$.

	\subsection{Representation theory of affine quivers}
		We shall briefly recall some well-known facts concerning the representation theory of affine quivers. We refer the reader to \cite{SS:volume2,ringel:1099} for details.

		We now fix an affine quiver $Q$, that is, an acyclic quiver of type $\Aaffine_{n},\, (n \geq 1)$, $\Daffine_n,\, (n \geq 4)$, $\Eaffine_n,\, (n =6,7,8)$. We will say that a quiver is of affine type $\Aaffine_{r,s}$ if it is an orientation of an affine diagram of affine type $\Aaffine_{r+s-1}$ with $r$ arrows going clockwise and $s$ arrows going counterclockwise. Let $\mathfrak g_Q$ denote the Kac-Moody algebra associated to $Q$. 
	
		We denote by $\Phi_{>0}$ the set of \emph{positive roots} of $\mathfrak g_Q$, by $\Phi_{>0}^{\re}$ the set of positive \emph{real roots} and by $\Phi_{>0}^{\im}$ the set of positive \emph{imaginary roots}. Since $Q$ is affine, there exists a unique $\delta \in \Phi_{>0}$ such that $\Phi_{>0}^{\im}=\Z_{>0}\delta$.  We always identify the root lattice of $\mathfrak g_Q$ with $\Z^{Q_0}$ by sending the $i$-th simple root of $\mathfrak g_Q$ to the $i$-th vector of the canonical basis of $\Z^{Q_0}$.

		According to Kac's theorem, for any $\textbf d \in \N^{Q_0}$, there exists an indecomposable representation $M$ such that $\ddim M=\textbf d$ if and only if $\textbf d \in \Phi_{>0}$. Moreover, this representation is unique up to isomorphism if and only if $\textbf d \in \Phi_{>0}^{\re}$. A positive root $\textbf d$ is called a \emph{Schur root} if there exists a (necessarily indecomposable) representation $M$ of $Q$ such that $\ddim M=\textbf d$ and $\End_{\kQ}(M) \simeq \k$.
	
		We define a partial order $\leq$ on the root lattice by setting 
		$$\textbf e \leq \textbf f \Leftrightarrow e_i \leq d_i \textrm{ for any } i \in Q_0.$$
		and we set 
		$$\textbf e \lneqq \textbf f\textrm{ if }\textbf e \leq \textbf f\textrm{ and }\textbf e \neq \textbf f.$$

		The Auslander-Reiten quiver $\Gamma(\kQ\modg)$ of $\kQ$-mod contains infinitely many connected components. There exists a connected component containing all the projective (resp. injective) modules, called \emph{preprojective} (resp. \emph{preinjective}) component of $\Gamma(\kQ\modg)$ and denoted by $\mathcal P$ (resp. $\mathcal I$). The other components are called \emph{regular}. A $\kQ$-module $M$ is called \emph{preprojective} (resp. \emph{preinjective}, \emph{regular}) if each indecomposable direct summand of $M$ belongs to a preprojective (resp. preinjective, regular) component. 

		It is convenient to introduce the so-called \emph{defect form} on $\Z^{Q_0}$. It is given by
		$$\d_? : \left\{\begin{array}{rcl}
			\Z^{Q_0} & \fl & \Z \\
			\textbf e & \mapsto & \d_{\textbf e}=\<\delta, \textbf e\>.
		\end{array}\right.$$
		By definition, the defect $\d_M$ of a $\kQ$-module $M$ is the defect $\d_{\ddim M}$ of its dimension vector. It is well-known that an indecomposable $\kQ$-module $M$ is preprojective (resp. preinjective, regular) if and only if $\d_M < 0$ (resp. $>0$, $=0$). 

		The regular components in $\Gamma(\kQ\modg)$ form a $\P^1(\k)$-family of tubes. Thus, for every tube $\mathcal T$, there exists an integer $p \geq 1$, called \emph{rank} of $\mathcal T$ such that $\mathcal T \simeq \Z\Ai/(\tau^p)$. The tubes of rank one are called \emph{homogeneous}, the tubes of rank $p>1$ are called \emph{exceptional}. At most three tubes are exceptional in $\Gamma(\kQ\modg)$. It is well-known that the full subcategory of $\kQ$-mod formed by the objects in any tube $\mathcal T$ is standard, that is, isomorphic to the mesh category of $\mathcal T$. It is also known that there are neither morphisms nor extensions between pairwise distinct tubes.

		An indecomposable regular $\kQ$-module $M$ is called \emph{quasi-simple} if it is at the mouth of the tube, or equivalently, if it does not contain any proper regular submodule. For any quasi-simple module $R$ in a tube $\mathcal T$ and any integer $l \geq 1$, we denote by $R^{(l)}$ the unique indecomposable $\kQ$-module with quasi-socle $R$ and quasi-length $l$. For any indecomposable regular $\kQ$-module $R^{(l)}$, we denote by 
		$$\qsoc R^{(l)}=R$$
		the \emph{quasi-socle} of $M$ and by 
		$$\qrad R^{(l)} = R^{(l-1)}$$ the quasi-radical of $M$ with the convention that $R^{(0)}=0$.

		For any indecomposable regular $\kQ$-module $M$, we have the following~:
		$$M \textrm{ is rigid } \Leftrightarrow \ddim M \lneqq \delta~;$$
		$$\End_{\kQ}(M) \simeq \k \Leftrightarrow \ddim M \leq \delta.$$

		Cluster characters associated to modules in tubes are known to be governed by the combinatorics of generalized Chebyshev polynomials. More precisely, it is proved in \cite[Theorem 5.1]{Dupont:stabletubes} that for any quasi-simple module $M$ in a tube $\mathcal T$, we have 
		$$X_{M^{(l)}}=P_l(X_M, X_{\tau^{-1}M}, \ldots, X_{\tau^{-l+1}M}).$$
		In particular, if $\mathcal T$ is homogeneous, we get 
		$$X_{M^{(l)}}=S_l(X_M)$$
		recovering a result of \cite{CZ}.
		
		In \cite[Theorem 7.2]{Dupont:stabletubes}, generalized Chebyshev polynomials provide multiplication formulas for cluster characters associated to indecomposable regular $\kQ$-modules. The following theorem will be essential in the proofs~:
		\begin{theorem}[\cite{Dupont:stabletubes}]\label{theorem:multintubes}
			Let $Q$ be an affine quiver and $\mathcal T$ be a tube of rank $p$ in $\Gamma(\kQ\modg)$. Let $R_i, i \in \Z$ denote the quasi-simple modules in $\mathcal T$ ordered such that $\tau R_i \simeq R_{i-1}$ and $R_{i+p} \simeq R_i$ for any $i \in \Z$. Let $m,n>0$ be integers and $j \in [0,p-1]$. Then, for every $k \in \Z$ such that $0 < j+kp  \leq n$ and $m \geq n-j-kp$, we have the following identity~:
			$$X_{R_{j}^{(m)}}X_{R_0^{(n)}}=X_{R_0^{(m+j+kp)}}X_{R_{j}^{(n-j-kp)}}+X_{R_0^{(j+kp-1)}}X_{R_{n+1}^{(m+j+kp-n-1)}}.$$
		\end{theorem}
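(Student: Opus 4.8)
The plan is to reduce the stated multiplication formula to a purely combinatorial identity among generalized Chebyshev polynomials, using the description of cluster characters of regular modules recalled above. Write $x_i=X_{R_i}$ for $i\in\Z$, so that $x_{i+p}=x_i$ by the periodicity $R_{i+p}\simeq R_i$. The formula $X_{M^{(l)}}=P_l(X_M,X_{\tau^{-1}M},\ldots,X_{\tau^{-l+1}M})$ of \cite[Theorem 5.1]{Dupont:stabletubes} together with $\tau^{-1}R_i\simeq R_{i+1}$ gives $X_{R_i^{(l)}}=P_l(x_i,x_{i+1},\ldots,x_{i+l-1})$. Substituting this for each of the six characters occurring in the statement and setting $q=j+kp$ (so the hypotheses read $0<q\le n$ and $m\ge n-q$), the claimed equality becomes
\begin{equation*}
P_m(x_q,\ldots,x_{q+m-1})\,P_n(x_0,\ldots,x_{n-1})=P_{m+q}(x_0,\ldots,x_{m+q-1})\,P_{n-q}(x_q,\ldots,x_{n-1})+P_{q-1}(x_0,\ldots,x_{q-2})\,P_{m+q-n-1}(x_{n+1},\ldots,x_{m+q-1}).
\end{equation*}
The crucial observation is that this is a polynomial identity in \emph{free} variables $x_0,\ldots,x_{m+q-1}$; the periodicity $x_{i+p}=x_i$ is imposed only afterwards, when one specializes back to the characters $X_{R_i}$. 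Hence it suffices to prove it for independent indeterminates.

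Next I would introduce the continuant notation $C[a,b]=P_{b-a+1}(x_a,\ldots,x_b)$, with the conventions $C[a,a-1]=P_0=1$ and $C[a,a-2]=P_{-1}=0$. Putting $a=0$, $b=q$, $c=n-1$, $d=m+q-1$, the hypotheses $0<q\le n$ and $m\ge n-q$ become precisely $a\le b\le c\le d$, and the displayed identity is the crossing-interval Pl\"ucker relation
\begin{equation*}
C[a,c]\,C[b,d]-C[a,d]\,C[b,c]=C[a,b-2]\,C[c+2,d],\qquad a\le b\le c\le d.
\end{equation*}
Every degenerate case permitted by the hypotheses (for instance $q=n$, forcing $b=c+1$, or $m=n-q$, forcing $d=c$) is absorbed into the conventions $P_0=1$ and $P_{-1}=0$, so the whole theorem comes down to this single relation.

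To prove the relation I would encode continuants as entries of $\mathrm{SL}_2$-matrices. Set $\mathbf M_a^b=\bigl(\begin{smallmatrix}x_b&-1\\1&0\end{smallmatrix}\bigr)\cdots\bigl(\begin{smallmatrix}x_a&-1\\1&0\end{smallmatrix}\bigr)$; an induction on $b-a$ using the defining recursion of $P_l$ identifies $\mathbf M_a^b=\bigl(\begin{smallmatrix}C[a,b]&-C[a+1,b]\\ C[a,b-1]&-C[a+1,b-1]\end{smallmatrix}\bigr)$, and since each factor has determinant $1$ one gets the unimodularity relation $C[b,c-1]C[b+1,c]-C[b,c]C[b+1,c-1]=1$. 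Factoring $\mathbf M_a^c=\mathbf M_b^c\,\mathbf M_a^{b-1}$ and $\mathbf M_a^d=\mathbf M_b^d\,\mathbf M_a^{b-1}$ and reading off first columns yields $C[a,c]=C[b,c]C[a,b-1]-C[b+1,c]C[a,b-2]$ and $C[a,d]=C[b,d]C[a,b-1]-C[b+1,d]C[a,b-2]$. Substituting these into $C[a,c]C[b,d]-C[a,d]C[b,c]$, the terms carrying $C[a,b-1]$ cancel and one is left with $C[a,b-2]\bigl(C[b,c]C[b+1,d]-C[b,d]C[b+1,c]\bigr)$. A symmetric computation, now factoring $\mathbf M_b^d=\mathbf M_{c+1}^d\,\mathbf M_b^c$ and reading off the top row, rewrites the inner bracket as $C[c+2,d]$ times the unimodularity relation for $\mathbf M_b^c$, which equals $1$; this produces exactly the required right-hand side.

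The genuine content is therefore the continuant Pl\"ucker relation, which the $\mathrm{SL}_2$-encoding makes transparent. The main obstacle I anticipate is not analytic but organizational: matching the representation-theoretic indexing to the combinatorial one. One must check carefully that the quasi-socle/quasi-length conventions ($\qsoc R_i^{(l)}=R_i$, $\qrad R_i^{(l)}=R_i^{(l-1)}$, $R_i^{(0)}=0$) translate the six modules in the statement into precisely the six continuants $C[0,n-1],C[q,m+q-1],C[0,m+q-1],C[q,n-1],C[0,q-2],C[n+1,m+q-1]$, that the inequalities $0<j+kp\le n$ and $m\ge n-j-kp$ correspond to $a\le b\le c\le d$, and that each boundary configuration collapses consistently under $P_0=1$ and $P_{-1}=0$. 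Once this dictionary is fixed, the periodicity $R_{i+p}\simeq R_i$ is harmless, since the identity is established for free variables and only then specialized to $x_i=X_{R_i}$.
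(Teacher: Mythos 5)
The paper does not actually prove this statement: Theorem~\ref{theorem:multintubes} is imported from \cite[Theorem 7.2]{Dupont:stabletubes} and used as a black box, so there is no internal proof to compare yours against. Judged on its own, your argument is a correct self-contained derivation from the one ingredient the paper does recall, namely $X_{R_i^{(l)}}=P_l(X_{R_i},X_{R_{i+1}},\ldots,X_{R_{i+l-1}})$. The reduction to a free-variable identity is legitimate (specializing a polynomial identity to $x_i=X_{R_{i\bmod p}}$ never requires algebraic independence of the specialized values), your dictionary between the six characters and the six continuants $C[0,n-1]$, $C[q,m+q-1]$, $C[0,m+q-1]$, $C[q,n-1]$, $C[0,q-2]$, $C[n+1,m+q-1]$ is right, and the $\mathrm{SL}_2$-factorization proof of $C[a,c]C[b,d]-C[a,d]C[b,c]=C[a,b-2]C[c+2,d]$ checks out, including in low-rank test cases. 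Two bookkeeping points should be repaired. First, the hypotheses $0<q\le n$, $m\ge n-q$, $m>0$ translate to $a<b\le c+1$ and $b,c\le d$, not to $a\le b\le c\le d$ as you first assert; you do immediately flag $b=c+1$ as a degenerate case, but the stated dictionary should match it. Second, in that boundary case $\mathbf M_b^c$ is an empty product and your unimodularity relation $C[b,c-1]C[b+1,c]-C[b,c]C[b+1,c-1]=1$ involves $C[c+2,c-1]=P_{-2}$, which the conventions $P_0=1$, $P_{-1}=0$ do not cover; either extend them to $P_{-2}=-1$ (consistent with the three-term recursion) or verify the inner bracket $C[b,c]C[b+1,d]-C[b,d]C[b+1,c]=C[c+2,d]$ directly when $b=c+1$, which is immediate since then $C[b,c]=1$ and $C[b+1,c]=0$. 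With those repairs the proof is complete; the continuant/$\mathrm{SL}_2$ encoding is an efficient packaging of the generalized Chebyshev combinatorics that the cited reference handles through the tridiagonal determinant presentation.
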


	\subsection{Bases in affine cluster algebras}
		We shall now review some results concerning the construction of $\Z$-bases in cluster algebras associated to affine quivers. In this section, $Q$ still denotes an affine quiver with positive minimal imaginary root $\delta$.
	
		It is known that if $M,N$ are quasi-simple modules in distinct homogeneous tubes, then $X_M=X_N$ (see e.g. \cite{Dupont:BaseAaffine}). We denote by $X_{\delta}$ this common value. Following the terminology of \cite{Dupont:BaseAaffine}, $X_{\delta}$ is called the \emph{generic variable of dimension $\delta$}.

		The following holds~:
		\begin{theorem}[\cite{Dupont:BaseAaffine,DXX:basesv3}]\label{theorem:genericbasis}
			Let $Q$ be an affine quiver. Then 
			$$\mathcal G(Q)=\mathcal M(Q) \sqcup \ens{X_\delta^lX_R|l \geq 1, R \textrm{ is a regular rigid $\kQ$-module}}$$
			is a $\Z$-basis of $\mathcal A(Q)$.

			Moreover, $\den$ induces a 1-1 correspondence from $\mathcal G(Q)$ to $\Z^{Q_0}$.
		\end{theorem}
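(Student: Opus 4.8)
The plan is to prove the two assertions essentially simultaneously, using the denominator vector as the organizing invariant. By the Caldero-Keller denominator theorem, $\den(X_M)=\ddim M$ for every object $M$ of $\CC_Q$, so each cluster monomial $X_T\in\mathcal M(Q)$ has $\den(X_T)=\ddim T$. The first task is to compute the denominators of the remaining elements: I would show that $\den(X_\delta^lX_R)=l\delta+\ddim R$ for every $l\geq 1$ and every regular rigid $R$. Since $\den(X_\delta)=\delta$ (it is the denominator of $X_M$ for any module $M$ of dimension $\delta$), this amounts to proving an \emph{additivity} statement, namely that no cancellation of leading terms occurs in the product $X_\delta^lX_R$. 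I would obtain this from the positivity of the Laurent expansions together with a compatibility argument: the generic variable $X_\delta$ and the character $X_R$ have ``sign-coherent'' leading behaviour with respect to the initial cluster $\textbf u$, so that the Laurent monomial $\textbf u^{-(l\delta+\ddim R)}$ survives in the product with a positive coefficient and remains extremal.

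The second, and combinatorially heaviest, step is to prove that $\den$ maps $\mathcal G(Q)$ \emph{bijectively} onto $\Z^{Q_0}$. I would organize the argument by the defect $\d_?$. Every $\textbf d\in\Z^{Q_0}$ of nonzero defect, and every defect-zero $\textbf d$ lying in the dimension span of a rigid regular module (i.e. decomposing into quasi-lengths strictly smaller than the relevant tube ranks), is realized uniquely as $\ddim T$ for a rigid object $T$ of $\CC_Q$; this uses the classification of indecomposable rigid objects (preprojectives, preinjectives, the projective shifts $P_i[1]$, and regular modules with $\ddim\lneqq\delta$) and the fact that compatible direct summands live in distinct tubes. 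The complementary vectors are exactly those defect-zero $\textbf d$ whose regular part cannot be shrunk below a positive multiple of $\delta$; writing such a $\textbf d$ uniquely as $l\delta+\ddim R$ with $l\geq 1$ and $R$ regular rigid (using $\delta$-periodicity of tubes and the two equivalences $\ddim M\lneqq\delta\Leftrightarrow M$ rigid and $\ddim M\leq\delta\Leftrightarrow\End_{\kQ}(M)\simeq\k$) identifies them precisely with the denominators of the imaginary elements. Checking that these two families partition $\Z^{Q_0}$ with no overlap --- the disjointness being guaranteed by the constraint $l\geq 1$ --- is the crux of the whole proof.

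With the bijection in hand, linear independence is a leading-term argument. I would fix a term order on $\Z[\textbf u^{\pm 1}]$ compatible with divisibility and show that each $g\in\mathcal G(Q)$ has $\textbf u^{-\den(g)}$ as its unique maximal Laurent monomial, occurring with a positive coefficient; since $\den$ is injective on $\mathcal G(Q)$ these maximal monomials are pairwise distinct, so any nontrivial $\Z$-relation $\sum c_g\,g=0$ would have a well-defined maximal term that cannot cancel, a contradiction. Finally, for spanning I would exploit the multiplication formulas of Theorem \ref{theorem:multintubes} together with the Chebyshev identities $X_{M^{(l)}}=S_l(X_M)$, $X_{M^{(l)}}=P_l(X_M,\ldots,X_{\tau^{-l+1}M})$ and $F_l=S_l-S_{l-2}$. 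Because $\ens{1,x,x^2,\ldots}$, $\ens{S_l(x)}_{l\geq 0}$ and $\ens{2,F_1(x),F_2(x),\ldots}$ are related by unitriangular changes of variables, the $\Z$-span of $\ens{X_\delta^lX_R}$ coincides with that of $\ens{S_l(X_\delta)X_R}$, and Theorem \ref{theorem:multintubes} lets me rewrite any product of characters of regular modules --- in particular any product of cluster variables coming from a single tube --- as a $\Z$-linear combination of cluster monomials and such imaginary elements. An induction on the denominator (made finite by the bijection) then shows that the span of $\mathcal G(Q)$ is closed under multiplication by cluster variables, hence equals all of $\mathcal A(Q)$. I expect the genuine difficulty to be concentrated in the second step: controlling the regular part of the root lattice tube-by-tube so that the partition of $\Z^{Q_0}$ and the non-cancellation of denominators hold simultaneously.
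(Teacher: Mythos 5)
This theorem is not proved in the paper you are reading: it is imported verbatim from \cite{Dupont:BaseAaffine} and \cite{DXX:basesv3}, so there is no internal proof to compare your attempt against. Judged against the arguments in those references, your overall architecture (compute denominators, show $\den$ is a bijection onto $\Z^{Q_0}$, deduce linear independence by a leading-monomial argument, and get spanning from multiplication formulas) is the right one, and you have correctly located the combinatorial crux in the partition of $\Z^{Q_0}$ by defect and regular part.

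Three points deserve attention. First, the additivity $\den(X_\delta^lX_R)=l\delta+\ddim R$ does not need any positivity or ``sign-coherence'' input: since $X_\delta=X_M$ for $M$ quasi-simple in a homogeneous tube and $X_{M\oplus N}=X_MX_N$ by definition, one has $X_\delta^lX_R=X_{M^{\oplus l}\oplus R}$, and the Caldero--Keller denominator theorem, which the paper states for arbitrary objects of $\CC_Q$, gives the denominator directly. Invoking positivity of Laurent expansions here smuggles in a theorem that is itself at least as hard as the one you are proving. Second, your independence argument tacitly assumes that the Laurent monomial $\textbf u^{-\den(g)}$ actually occurs in $g$ with nonzero coefficient; this does not follow from the definition of $\den$ (a polynomial can be divisible by no single $u_i$ yet have zero constant term, as $u_1+u_2$ shows), so it must be extracted from the structure of the character formula --- it is part of what the denominator theorem's proof delivers, not a formal consequence of its statement. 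Third, for spanning, closure of the span under multiplication by cluster variables requires multiplication formulas for products of characters of objects lying in \emph{different} components (preprojective times regular, characters from distinct clusters, etc.); Theorem \ref{theorem:multintubes} only handles products inside a single tube, so you need the general Caldero--Keller cluster multiplication theorem (or its Ding--Xiao--Xu refinement), which is precisely the main tool of the cited proofs and is absent from your sketch.
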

		The set $\mathcal G(Q)$ is called the \emph{generic basis} of $\mathcal A(Q)$. 

		Since $F_l$ and $S_l$ are monic polynomials of degree $l$, it follows that, for any affine quiver $Q$, the sets
		$$\mathcal B(Q)=\mathcal M(Q) \sqcup \ens{F_l(X_\delta)X_R|l \geq 1, R \textrm{ is a regular rigid $\kQ$-module}}$$
		and
		$$\mathcal C(Q)=\mathcal M(Q) \sqcup \ens{S_l(X_\delta)X_R|l \geq 1, R \textrm{ is a regular rigid $\kQ$-module}}$$
		are $\Z$-bases of the cluster algebra $\mathcal A(Q)$. 

		When $Q$ is the Kronecker quiver $\mathcal B(Q)$ coincides with the canonically positive basis constructed by Sherman and Zelevinsky \cite{shermanz} and $\mathcal C(Q)$ coincides with the basis constructed by Caldero and Zelevinsky \cite{CZ}. When $Q$ is a quiver of affine type $\Aaffine_{2,1}$, the basis $\mathcal B(Q)$ is the canonically positive basis of $\mathcal A(Q)$ constructed by Cerulli \cite{Cerulli:A21}.

		Since $X_{\delta}=X_M$ for any quasi-simple module in a homogeneous tube, it follows that $S_l(X_\delta)=X_{M^{(l)}}$ so that the set $\mathcal C(Q)$ has an interpretation in terms of the cluster character $X_?$. No such interpretation is known for the set $\mathcal B(Q)$. The aim of this paper is to provide one.
		
		The map $\phi: \mathcal G(Q) \fl \mathcal B(Q)$ preserving cluster monomials and sending $X_\delta^l X_R$ to $F_l(X_\delta)X_R$ for any $l \geq 1$ and any rigid regular module $R$ is a 1-1 correspondence. We denote by 
		$$\mathfrak b_?:\left\{\begin{array}{rcl}
			\Z^{Q_0} & \xrightarrow{1:1} & \mathcal B(Q)\\
			\textbf d & \mapsto & \mathfrak b_{\textbf d}
		\end{array}\right.$$
		the 1-1 correspondence obtained by composing the above bijection with the one provided in Theorem \ref{theorem:genericbasis}.

\section{Difference properties of higher orders}\label{section:differences}
	In this section, $Q$ still denotes an affine quiver with positive minimal imaginary root $\delta$.

	\subsection{The difference property}
		In \cite{Dupont:BaseAaffine} was introduced the \emph{difference property} which relates the possibly different values of cluster characters evaluated at different indecomposable representations of dimension $\delta$. This difference property is crucial in \cite{Dupont:BaseAaffine}. It is also an essential ingredient in the present article since the transverse Grassmannians will precisely arise from difference properties of higher orders.

		This difference property was established in \cite{Dupont:BaseAaffine} for affine type $\Aaffine$ and in \cite{DXX:basesv3} in general. It can be expressed as follows~:
		\begin{theorem}[\cite{Dupont:BaseAaffine, DXX:basesv3}]\label{theorem:diffppt}
			Let $Q$ be an affine quiver and $M$ be any indecomposable module of dimension $\delta$, then
			$$\mathfrak b_{\delta}=X_\delta=X_M-X_{\qrad M/\qsoc M}$$
			with the convention that $X_{\qrad M/\qsoc M}=0$ if $M$ is quasi-simple.
		\end{theorem}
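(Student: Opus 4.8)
The statement has two parts, and I would dispose of the identity $\mathfrak b_{\delta}=X_\delta$ first, since it only requires unwinding the definition of $\mathfrak b_?$. By the Caldero--Keller denominator theorem, $\den(X_\delta)=\ddim R=\delta$ for any quasi-simple module $R$ in a homogeneous tube. As $\den$ restricts to a bijection $\mathcal G(Q)\xrightarrow{\sim}\Z^{Q_0}$ by Theorem \ref{theorem:genericbasis}, the unique element of $\mathcal G(Q)$ of denominator $\delta$ is $X_\delta=X_\delta^1X_0$ (the one with $l=1$, $R=0$). Since $F_1(x)=x$ and $X_0=1$, the bijection $\phi$ sends it to $F_1(X_\delta)X_0=X_\delta$, whence $\mathfrak b_\delta=\phi(\den^{-1}(\delta))=X_\delta$.

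For the difference identity, the quasi-simple case is immediate: an indecomposable of dimension $\delta$ that is quasi-simple lies in a homogeneous tube, so $X_M=X_\delta$ by definition of the generic variable while the stated convention gives $X_{\qrad M/\qsoc M}=0$. It then remains to treat an indecomposable $M$ of dimension $\delta$ in an exceptional tube $\mathcal T$ of rank $p\geq 2$. After relabelling the quasi-simples $R_i$ ($i\in\Z/p\Z$) of $\mathcal T$, I may assume $M=R_0^{(p)}$; then $\qsoc M=R_0$, $\qrad M=R_0^{(p-1)}$, and the quasi-socle sequence $0\to R_0\to R_0^{(p-1)}\to R_1^{(p-2)}\to 0$ identifies $\qrad M/\qsoc M=R_1^{(p-2)}$ (the zero module when $p=2$, whose character is $X_0=1$). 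The claim thus becomes $X_{R_0^{(p)}}=X_\delta+X_{R_1^{(p-2)}}$.

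I would establish this in two steps, the first of which shows that $X_{R_j^{(p)}}-X_{R_{j+1}^{(p-2)}}$ is independent of $j$. Using the generalized Chebyshev description $X_{R_i^{(l)}}=P_l(X_{R_i},\ldots,X_{R_{i+l-1}})$ of \cite{Dupont:stabletubes} together with the tridiagonal determinant formula for $P_l$, one obtains, with $B_i=\left(\begin{smallmatrix}X_{R_i}&-1\\1&0\end{smallmatrix}\right)$,
$$X_{R_j^{(p)}}-X_{R_{j+1}^{(p-2)}}=\Tr\!\left(B_{j+p-1}\cdots B_{j}\right),$$
the product running over all $p$ quasi-simples of $\mathcal T$ in cyclic order. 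Since $R_{i+p}=R_i$ indexes the $B_i$ over $\Z/p\Z$, the cyclic invariance of the trace makes the right-hand side independent of $j$.

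The main obstacle is the second step: identifying this common value with the generic variable $X_\delta$ coming from the homogeneous tubes. This is a genuine cross-tube comparison, since there are neither morphisms nor extensions between distinct tubes and the trace above is a priori an expression in the characters of the quasi-simples of $\mathcal T$ alone, so it cannot be reduced to an identity internal to $\mathcal T$. I would resolve it via Theorem \ref{theorem:genericbasis}: because $\ddim R_1^{(p-2)}\lneqq\delta$, the module $R_1^{(p-2)}$ is rigid regular, so $X_{R_1^{(p-2)}}$ is a cluster monomial and lies in $\mathcal G(Q)$, while $X_\delta$ is the unique element of $\mathcal G(Q)$ of denominator $\delta$. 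Expanding $X_{R_0^{(p)}}$ in the generic basis, the denominator theorem forces the leading (denominator-$\delta$) term to be $X_\delta$ with coefficient one, and the multiplication formula of Theorem \ref{theorem:multintubes} would be used to pin down the remaining lower-denominator terms as exactly $X_{R_1^{(p-2)}}$. Geometrically, this amounts to comparing $\Gr(R_0^{(p)})$ with the quiver Grassmannian of the dimension-$\delta$ indecomposable in a homogeneous tube and showing, through the Caldero--Chapoton formula, that the discrepancy is carried by the submodules meeting the quasi-radical and contributes precisely $X_{R_1^{(p-2)}}$. This decomposition is the $l=1$ seed of the transverse-Grassmannian construction developed in the body of the paper, and making it rigorous is where the real work lies.
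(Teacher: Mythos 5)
The paper does not prove this statement: Theorem \ref{theorem:diffppt} is quoted as a known result from \cite{Dupont:BaseAaffine} and \cite{DXX:basesv3}, and everything in Section \ref{section:differences} is built on top of it as a base case. So there is no in-paper proof to compare yours against; your attempt has to stand on its own, and as written it does not close.

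The parts you do carry out are correct. The identification $\mathfrak b_\delta=X_\delta$ is indeed pure bookkeeping with $\den$ and $\phi$, the reduction of the quasi-simple case to the definition of the generic variable is fine (a quasi-simple of dimension $\delta$ must sit in a homogeneous tube), and your transfer-matrix observation is a genuinely nice touch: writing $P_l$ as the $(1,1)$-entry of $B_{j+l-1}\cdots B_j$ one checks that $(B_{j+p-1}CB_j)_{22}=-C_{11}$ for $C=B_{j+p-2}\cdots B_{j+1}$, so $X_{R_j^{(p)}}-X_{R_{j+1}^{(p-2)}}=\Tr(B_{j+p-1}\cdots B_j)$ and cyclic invariance of the trace makes this independent of $j$. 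But that only shows the difference is well defined within the exceptional tube; the theorem's content is the cross-tube identity $X_{R_0^{(p)}}-X_{R_1^{(p-2)}}=X_\delta$, and there your argument is a sketch that does not work as stated. Expanding $X_{R_0^{(p)}}$ in $\mathcal G(Q)$ and invoking $\den(X_{R_0^{(p)}})=\delta$ does not force the coefficient of $X_\delta$ to be one and the remainder to have smaller denominator: denominator vectors are not additive over $\Z$-linear combinations (cancellation can lower them), and you have no a priori positivity of the coefficients, so no ``leading term'' argument is available. Nor can you appeal to the transverse-Grassmannian comparison of Theorem \ref{theorem:zerodefect}, since its proof (via Theorem \ref{theorem:generaldifferenceppty} and Proposition \ref{prop:diffppthomogene}) uses Theorem \ref{theorem:diffppt} as the $l=1$ input; that route is circular. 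The actual proofs in the cited references establish the $\delta$-case comparison of Euler characteristics of quiver Grassmannians across tubes directly (e.g.\ via explicit Caldero--Keller multiplication identities or a geometric analysis of submodules meeting the quasi-radical), and that computation is exactly the missing step you flag but do not supply.
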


	\subsection{Higher difference properties}
		The aim of this section is to provide an analogue of Theorem \ref{theorem:diffppt} for $\mathfrak b_{\textbf d}$ when $\textbf d$ is any positive root with zero defect. We will first consider the imaginary roots and then the real roots of defect zero.

		We fix a tube $\mathcal T$ in $\Gamma(\kQ\modg)$ of rank $p \geq 1$. The quasi-simples of $\mathcal T$ are denoted by $R_i$, with $i \in \Z/p\Z$, ordered such that $\tau R_i \simeq R_{i-1}$ for any $i \in \Z/p\Z$. Note that for any $l \geq 1, 0 \leq k \leq p-1$ and any $i \in \Z/p\Z$, $\ddim R_i^{(lp)} =l\delta \in \Phi_{>0}^{\im}$ and $\ddim R_i^{(lp+k)} \in \Phi_{>0}^{\re}$ if $k \neq 0$.
		
		The following technical lemma will be used in the proof of Proposition \ref{prop:diffppthomogene}~:
		\begin{lem}\label{lem:keyidentity}
			With the above notations, for any $l \geq 1$
			$$X_{R_0^{(lp-1)}}X_{R_1^{(p-1)}}=X_{R_0^{(p-1)}}X_{R_1^{(lp-1)}}$$
		\end{lem}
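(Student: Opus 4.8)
The plan is to prove the identity
$$X_{R_0^{(lp-1)}}X_{R_1^{(p-1)}}=X_{R_0^{(p-1)}}X_{R_1^{(lp-1)}}$$
by applying the multiplication formula of Theorem \ref{theorem:multintubes} to \emph{both} sides and showing that each side collapses to the same expression. The key observation is that the modules involved, namely $R_0^{(lp-1)}$ and $R_0^{(p-1)}$, have quasi-lengths that are one less than a multiple of $p$, so the ``boundary'' correction terms in Theorem \ref{theorem:multintubes} should vanish, leaving a single clean product on each side.

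First I would set up Theorem \ref{theorem:multintubes} for the left-hand side. I take the quasi-simple $R_0$ as base and match the general shape $X_{R_j^{(m)}}X_{R_0^{(n)}}$. Since the theorem is stated with $R_0^{(n)}$ on the right, I would rewrite $X_{R_0^{(lp-1)}}X_{R_1^{(p-1)}}$ using commutativity so that one factor plays the role of $R_j^{(m)}$ and the other the role of $R_0^{(n)}$; here one takes $j=1$, so that $R_j=R_1=\tau^{-1}R_0$, with suitable $m$ and $n$. The crucial point is to choose the integer $k$ in the statement so that $0<j+kp\leq n$ holds, and then to check that the term $X_{R_0^{(j+kp-1)}}X_{R_{n+1}^{(m+j+kp-n-1)}}$ either vanishes or recombines. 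Because the quasi-lengths here are of the form $ap-1$, the indices $j+kp-1$ and the shifted quasi-simple $R_{n+1}$ will land exactly on values where one of the two factors in the correction term has quasi-length $0$, i.e. is the zero module, forcing that summand to be $0$ (using the convention $R^{(0)}=0$ and $X_0=1$ together with the fact that an empty/zero summand contributes nothing).

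Then I would carry out the symmetric computation for the right-hand side $X_{R_0^{(p-1)}}X_{R_1^{(lp-1)}}$, again applying Theorem \ref{theorem:multintubes} with an appropriate choice of $k$, and verify that its surviving main term coincides, after reindexing the quasi-simples via the periodicity $R_{i+p}\simeq R_i$, with the surviving main term from the left-hand side. The equality of the two main terms is essentially a matter of matching quasi-socles and quasi-lengths modulo $p$, which the standard tube combinatorics (the mesh-category structure recalled in the background) makes routine once the correct $k$ is fixed. An alternative, possibly cleaner, route is an induction on $l$: the base case $l=1$ is trivial since both sides literally coincide, and the inductive step would feed the $l-1$ case into a single application of Theorem \ref{theorem:multintubes} to raise the quasi-length by $p$.

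The main obstacle I anticipate is the bookkeeping of indices: one must pin down exactly which of $j+kp$, $n-j-kp$, $j+kp-1$, and $m+j+kp-n-1$ are the relevant quasi-lengths, confirm that the inequality constraints $0<j+kp\leq n$ and $m\geq n-j-kp$ of Theorem \ref{theorem:multintubes} are met by the chosen $k$, and ensure that the correction term genuinely vanishes rather than merely being small. Getting the periodicity reduction $R_{n+1}\simeq R_{n+1-p}$ aligned with the $-1$ shifts in the quasi-lengths is where sign and off-by-one errors are most likely to creep in, so I would verify the computation explicitly in the rank-one case $p=1$ (where the identity reduces to a statement about second-kind Chebyshev polynomials $S_{l-1}(X_\delta)\cdot 1 = 1\cdot S_{l-1}(X_\delta)$, hence trivial) as a sanity check before trusting the general index manipulation.
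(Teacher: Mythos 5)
Your strategy is genuinely different from the paper's. The paper does not expand both products; it applies Theorem \ref{theorem:multintubes} once to get $X_{R_1^{(p-1)}}X_{R_0^{(lp-1)}}=X_{R_0^{(p)}}X_{R_1^{(lp-2)}}-X_{R_{p+1}^{((l-1)p-2)}}$, and then transports this identity by the reversal morphism $\phi:X_{R_i}\mapsto X_{R_{lp-1-i}}$ of $\Z[X_{R_0},\dots,X_{R_{lp-1}}]$, using the palindromic symmetry $P_n(x_i,\dots,x_{i+n-1})=P_n(x_{i+n-1},\dots,x_i)$ of generalized Chebyshev polynomials to see that the right-hand side is $\phi$-invariant while the left-hand side is carried to $X_{R_0^{(p-1)}}X_{R_1^{(lp-1)}}$. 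Your plan --- expand both sides via the multiplication theorem and match terms --- is more direct and, when carried out correctly, does work: for the left side ($j=1$, $m=p-1$, $n=lp-1$) the constraints force $k=l-1$ and give $X_{R_0^{(lp)}}X_{R_1^{(p-2)}}+X_{R_0^{((l-1)p)}}X_{R_{lp}^{(0)}}$, while for the right side ($j=1$, $m=lp-1$, $n=p-1$) they force $k=0$ and give $X_{R_0^{(lp)}}X_{R_1^{(p-2)}}+X_{R_0^{(0)}}X_{R_{p}^{((l-1)p)}}$; by periodicity these agree term by term.

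The concrete error in your write-up is the claim that the correction term vanishes because one factor has quasi-length $0$. As you yourself note, $X_0=1$ (not $0$), so a factor $X_{R^{(0)}}$ makes the summand equal to the \emph{other} factor, not to zero: the left-hand correction term is $X_{R_0^{((l-1)p)}}\cdot 1=X_{R_0^{((l-1)p)}}$ and the right-hand one is $1\cdot X_{R_p^{((l-1)p)}}=X_{R_0^{((l-1)p)}}$, both nonzero. The lemma still follows because these two nonvanishing correction terms coincide, but the mechanism is ``equal correction terms,'' not ``vanishing correction terms''; as stated, your argument asserts a false intermediate identity (it would wrongly give $X_{R_0^{(lp-1)}}X_{R_1^{(p-1)}}=X_{R_0^{(lp)}}X_{R_1^{(p-2)}}$). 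Fix that one step and verify the index bookkeeping ($k=l-1$ versus $k=0$, and $R_{lp}\simeq R_p\simeq R_0$), and you have a complete proof that is arguably shorter than the paper's.
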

		\begin{proof}
			We first notice that generalized Chebyshev polynomials are symmetric in the sense that for every $i \in \Z$ and $n \geq 1$, 
			$$P_n(x_{i}, \ldots, x_{i+n-1})=P_n(x_{i+n-1}, \ldots, x_{i}).$$
			If $l=1$, the result is obvious. We thus fix some $l \geq 2$. For technical convenience, we denote by $R_i, i \in \Z$ the quasi-simple modules in $\mathcal T$ and we assume that $R_i \simeq R_{i+p}$ for every $i \in \Z$.
			Consider the morphism of $\Z$-algebras 
			$$\phi:\left\{\begin{array}{rcll} 
				    \Z[X_{R_0}, \ldots, X_{R_{lp-1}}] & \fl & \Z[X_{R_0}, \ldots, X_{R_{lp-1}}]\\
				    X_{R_i} & \mapsto & X_{R_{lp-1-i}} & \textrm{ for all }i=0, \ldots, lp-1.\\
			\end{array}\right.$$
			It is well defined since $X_{R_0}, \ldots, X_{R_{p-1}}$ are known to be algebraically independent over $\Z$ (see e.g. \cite{Dupont:stabletubes}).
			
			According to Theorem \ref{theorem:multintubes}, we have 
			$$\X{1}{p-1}\X{0}{lp-1}=\X{0}{p}\X{1}{lp-2}-\X{p+1}{(l-1)p-2}.$$
			According to \cite[Theorem 5.1]{Dupont:stabletubes}, each of the $X_{R_j}^{(k)}$ appearing above lies in $\Z[X_{R_0}, \ldots, X_{R_{lp-1}}]$. We can thus apply $\phi$ and we get 
			\begin{equation}\label{eq:equationphi}
				\phi(\X{1}{p-1})\phi(\X{0}{lp-1})=\phi(\X{0}{p})\phi(\X{1}{lp-2})-\phi(\X{p+1}{(l-1)p-2}).
			\end{equation}

			We now compute these images under $\phi$.
			\begin{align*}
				\phi(\X{1}{p-1})
					&= \phi(P_{p-1}(X_{R_1}, \ldots, X_{R_{p-1}}))\\
					&= P_{p-1}(\phi(X_{R_1}), \ldots, \phi(X_{R_{p-1}}))\\
					&= P_{p-1}(X_{R_{lp-2}}, \ldots, X_{R_{(l-1)p}})\\
					&= P_{p-1}(X_{R_{(l-1)p}}, \ldots, X_{R_{lp-2}})\\
					&= X_{R_{(l-1)p}^{(p-1)}}\\
					&= X_{R_{0}^{(p-1)}}\\
			\end{align*}
			\begin{align*}
				\phi(\X{0}{lp-1})
					&= \phi(P_{lp-1}(X_{R_0}, \ldots, X_{R_{lp-2}}))\\
					&= P_{lp-1}(\phi(X_{R_0}), \ldots, \phi(X_{R_{lp-2}}))\\
					&= P_{lp-1}(X_{R_{lp-1}}, \ldots, X_{R_{1}})\\
					&= P_{lp-1}(X_{R_{1}}, \ldots, X_{R_{lp-1}})\\
					&= X_{R_{1}^{(lp-1)}}\\
			\end{align*}
			\begin{align*}
				\phi(\X{0}{p})
					&= \phi(P_{p}(X_{R_0}, \ldots, X_{R_{p-1}}))\\
					&= P_{p}(\phi(X_{R_0}), \ldots, \phi(X_{R_{p-1}}))\\
					&= P_{p}(X_{R_{lp-1}}, \ldots, X_{R_{(l-1)p}})\\
					&= P_{p}(X_{R_{(l-1)p}}, \ldots, X_{R_{lp-1}})\\
					&= X_{R_{(l-1)p}^{(p)}}\\
					&= X_{R_{0}^{(p)}}\\
			\end{align*}
			\begin{align*}
				\phi(\X{1}{lp-2})
					&= \phi(P_{lp-2}(X_{R_1}, \ldots, X_{R_{lp-2}}))\\
					&= P_{lp-2}(\phi(X_{R_1}), \ldots, \phi(X_{R_{lp-2}}))\\
					&= P_{lp-2}(X_{R_{lp-2}}, \ldots, X_{R_{1}})\\
					&= P_{lp-2}(X_{R_{1}}, \ldots, X_{R_{lp-2}})\\
					&= X_{R_{1}^{(lp-2)}}\\
			\end{align*}
			\begin{align*}
				\phi(\X{p+1}{(l-1)p-2})
					&= \phi(P_{(l-1)p-2}(X_{R_{p+1}}, \ldots, X_{R_{lp-2}}))\\
					&= P_{(l-1)p-2}(\phi(X_{R_{p+1}}), \ldots, \phi(X_{R_{lp-2}}))\\
					&= P_{(l-1)p-2}(X_{R_{(l-1)p-2}}, \ldots, X_{R_{1}})\\
					&= P_{(l-1)p-2}(X_{R_{1}}, \ldots, X_{R_{(l-1)p-2}})\\
					&= X_{R_{1}^{((l-1)p-2})}\\
			\end{align*}
			Replacing in Equation (\ref{eq:equationphi}), we get
			\begin{align*}
			\X{0}{p-1}\X{1}{lp-1}
				&=\X{0}{p}\X{1}{lp-2}-\X{1}{(l-1)p-2}\\
				&=\X{0}{p}\X{1}{lp-2}-\X{p+1}{(l-1)p-2}\\
				&=\X{1}{p-1}\X{0}{lp-1}
			\end{align*}
			which proves the lemma.
		\end{proof}
	
		We can now prove some higher difference properties for imaginary roots.
		\begin{prop}\label{prop:diffppthomogene}
			Fix $l \geq 1$. Then for any indecomposable representation $M$ in $\rep(Q,l \delta)$, we have 
			$$\mathfrak b_{l \delta}=F_l(X_\delta)=X_M-X_{\qrad M/ \qsoc M}$$
			with the convention that $X_{\qrad M/ \qsoc M}=0$ if $M$ is quasi-simple.
		\end{prop}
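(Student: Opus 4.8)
The plan is to reduce the whole statement to a single character identity and then prove it by matching the defining recursion of $F_l$. First I would fix the tube $\mathcal T$ of rank $p$ containing $M$; since every indecomposable of dimension vector $l\delta$ is of the form $R_i^{(lp)}$, I may reindex the quasi-simples of $\mathcal T$ so that $M=\X{0}{lp}$, and then $\qrad M/\qsoc M=\X{0}{lp-1}/\qsoc M=\X{1}{lp-2}$. On the algebra side $\mathfrak b_{l\delta}=F_l(X_\delta)$ holds by the very definition of $\mathfrak b_?$: the unique element of $\mathcal G(Q)$ with denominator vector $l\delta$ is $X_\delta^l$ (take $R=0$ in Theorem \ref{theorem:genericbasis}), and $\phi(X_\delta^l)=F_l(X_\delta)$. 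Hence it suffices to show that $A_l:=\X{0}{lp}-\X{1}{lp-2}$ equals $F_l(X_\delta)$, and as $F_l$ is pinned down by $F_l=X_\delta F_{l-1}-F_{l-2}$ with $F_1=X_\delta$, $F_2=X_\delta^2-2$, I only need to check that $A_l$ obeys the same recursion and initial values. The homogeneous case $p=1$ is immediate, since then $\X{0}{l}=S_l(X_\delta)$, $\X{1}{l-2}=S_{l-2}(X_\delta)$ and $F_l=S_l-S_{l-2}$; so I would concentrate on $p\geq 2$.

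Set $B_m=\X{0}{mp}$, $C_m=\X{1}{mp-2}$ and introduce the mixed term $D_m=\X{0}{mp-1}\,\X{1}{p-1}$. By Theorem \ref{theorem:diffppt} one has $X_\delta=\X{0}{p}-\X{1}{p-2}=B_1-C_1$. A direct application of Theorem \ref{theorem:multintubes} gives
\[ \X{0}{p}\,B_m=B_{m+1}+D_m, \qquad D_m=\X{1}{p-1}\,\X{0}{mp-1}=C_1B_m+B_{m-1}, \]
and eliminating $D_m$ between these (using $\X{0}{p}-C_1=X_\delta$) yields the clean recursion $B_{m+1}=X_\delta B_m-B_{m-1}$ for the characters along the ray through $R_0$.

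The heart of the matter, and where I expect the main obstacle, is the companion recursion for the $C_m$: the products $\X{1}{p-2}\,B_m$ that one would like to expand lie \emph{outside} the range of Theorem \ref{theorem:multintubes} (a short module against a module of length a multiple of $p$, for which the size hypotheses of the theorem simply cannot be met), so they do not split into two terms. My device to bypass this is to compute the same mixed term $D_m$ a \emph{second} way: rewriting $D_m=\X{0}{p-1}\,\X{1}{mp-1}$ via Lemma \ref{lem:keyidentity} and applying Theorem \ref{theorem:multintubes} gives $D_m=C_{m+1}+C_1C_m$, while a reindexed application of Theorem \ref{theorem:multintubes} (again combined with Lemma \ref{lem:keyidentity}) gives $\X{0}{p}\,C_m=D_m+C_{m-1}$. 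Comparing the two expressions for $D_m$ produces
\[ C_{m+1}=\bigl(\X{0}{p}-C_1\bigr)C_m-C_{m-1}=X_\delta C_m-C_{m-1}\qquad(m\geq 2). \]
Thus $B_m$ and $C_m$ satisfy the \emph{identical} three–term recursion, and therefore so does $A_m=B_m-C_m$ for $m\geq 2$.

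It then remains to settle the initial data, which is delicate precisely because the $C$–recursion degenerates at $m=1$ (there $C_0$ would have negative quasi-length). Here I would use that both computations of $D_1$ stay valid at $m=1$: they read $D_1=C_2+C_1^2$ and $D_1=C_1B_1+B_0$ with $B_0=1$, so $C_2=C_1(B_1-C_1)+1=X_\delta C_1+1$; together with $B_2=X_\delta B_1-1$ this gives $A_2=X_\delta(B_1-C_1)-2=X_\delta^2-2=F_2(X_\delta)$. Since also $A_1=X_\delta=F_1(X_\delta)$ by Theorem \ref{theorem:diffppt}, the recursion $A_{m+1}=X_\delta A_m-A_{m-1}$ propagates $A_l=F_l(X_\delta)$ to all $l\geq 1$, which together with $\mathfrak b_{l\delta}=F_l(X_\delta)$ is the assertion. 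The points I would treat with care throughout are the bookkeeping of indices modulo $p$ when invoking Theorem \ref{theorem:multintubes}, the degenerate behaviour at $m=1$, and the small tubes $p\in\{1,2\}$, all of which are dispatched by the direct computations indicated above.
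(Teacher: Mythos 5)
Your proof is correct, and it rests on exactly the same inputs as the paper's --- reduction to an exceptional tube of rank $p\geq 2$, the identification of the candidate difference with $\X{0}{lp}-\X{1}{lp-2}$, and repeated use of Theorem \ref{theorem:multintubes}, Lemma \ref{lem:keyidentity} and Theorem \ref{theorem:diffppt} --- but you organize the induction differently, and more cleanly. The paper proves the three-term recursion for the difference $\Delta_l=\X{0}{lp}-\X{1}{lp-2}$ directly, by expanding $\Delta_1\Delta_l$ and running a long chain of substitutions until everything cancels, with Lemma \ref{lem:keyidentity} invoked at the very end. You prove the stronger statement that each constituent $B_m=\X{0}{mp}$ and $C_m=\X{1}{mp-2}$ separately satisfies $Y_{m+1}=X_\delta Y_m-Y_{m-1}$, by computing the single mixed term $D_m=\X{0}{mp-1}\X{1}{p-1}$ in two ways; Lemma \ref{lem:keyidentity} enters exactly once, to rewrite $D_m$ as $\X{0}{p-1}\X{1}{mp-1}$ so that Theorem \ref{theorem:multintubes} becomes applicable on the $C$-side. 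Your observation that $\X{1}{p-2}\cdot B_m$ cannot be split by Theorem \ref{theorem:multintubes} correctly identifies the one genuine obstruction, and the detour through $D_m$ is the right way around it. The initial data also come out right: $C_2=X_\delta C_1+1$ and $B_2=X_\delta B_1-1$ give $A_2=X_\delta^2-2=F_2(X_\delta)$, the two $\pm 1$ terms producing the $-2$; this is the same almost-split-sequence contribution the paper uses at $l=2$ (note the paper's displayed formula $\X{0}{p-1}\X{1}{p-1}=\X{0}{p}\X{1}{p-2}$ omits the $+1$, though its subsequent line is consistent with the corrected version, which your $D_1=C_1B_1+B_0$ matches). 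One cosmetic slip: you write $\qrad M/\qsoc M=\X{0}{lp-1}/\qsoc M$, conflating the module $R_0^{(lp-1)}$ with its character; the identification you actually use, $\qrad M/\qsoc M\simeq R_1^{(lp-2)}$, is of course correct.
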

		\begin{proof}
			We first treat the case where $M$ is an indecomposable representation of dimension $l \delta$ in a homogeneous tube. It is not necessary to prove it separately but in this particular case, the proof is straightforward. We write $M=R^{(l)}$ for some quasi-simple module $R$ in a homogeneous tube. If $l=1$, the proposition follows from Theorem \ref{theorem:diffppt}. If $l \geq 2$, $\qrad M/\qsoc M \simeq R^{(l-2)}$ so that 
			\begin{align*}
				X_M - X_{\qrad M/\qsoc M}
					& = X_{R^{(l)}} - X_{R^{(l-2)}}\\
					& = S_l(X_{R})-S_{l-2}(X_{R}) \\
					& = F_l(X_{R}) \\
					& = F_l(X_{\delta}).
			\end{align*}

			We now assume that $M$ is an indecomposable representation of dimension $l \delta$ in an exceptional tube $\mathcal T$ of rank $p  \geq 2$. We denote $R_0, \ldots, R_{p-1}$ the quasi-simples in $\mathcal T$ ordered such that $\tau R_i \simeq R_{i-1}$ for any $i \in \Z/p\Z$. We can thus write $M \simeq R_i^{(lp)}$ for some $i \in \Z/p\Z$. Without loss of generality, we assume that $i=0$. In order to simplify notations, for any $l \geq 1$, we denote by 
			$$\Delta_{l}=X_{R_0^{(lp)}}-X_{R_1^{(lp-2)}}.$$
			We thus have to prove that for any $l \geq 1$,
			$$\Delta_{l}=F_l(X_{\delta}).$$
			The central tool in this proof is Theorem \ref{theorem:multintubes}. According to Theorem \ref{theorem:diffppt}, we have 
			$$X_\delta=\X{0}{p}-\X{1}{p-2}$$
			so that the proposition holds for $l=1$. 
			
			We now prove it for $l=2$. We have
			\begin{align*}
				F_2(X_\delta) 
					&= X_\delta^2-2 \\
					&= (\X{0}{p}-\X{1}{p-2})^2-2\\
					&= \X{0}{p}^2+\X{1}{p-2}^2-2 \X{0}{p}\X{1}{p-2}-2\\
			\end{align*}
			but according to the almost split multiplication formula (\cite[Proposition 3.10]{CC}), we have
			$$\X{0}{p-1}\X{1}{p-1}=\X{0}{p}\X{1}{p-2}$$
			so that 
			$$F_2(X_\delta) = \X{0}{p}^2-2 \X{0}{p-1}\X{1}{p-1}+\X{1}{p-2}^2.$$
			But, according to Theorem \ref{theorem:multintubes}, we have
			\begin{align*}
				\X{0}{p}^2
					&=\X{0}{p}\X{0}{p}\\
					&=\X{0}{2p}+\X{0}{p-1}\X{1}{p-1}
			\end{align*}
			so that finally
			$$F_2(X_\delta) = \X{0}{2p}-\X{0}{p-1}\X{1}{p-1}+\X{1}{p-2}^2.$$
			
			Thus, 
			\begin{align*}
				& F_2(X_\delta) = \Delta_2 \\
				\Iff & -\X{1}{2p-2} = \X{1}{p-2}^2 -\X{0}{p-1}\X{1}{p-1}\\
				\Iff & \X{1}{2p-2} + \X{1}{p-2}^2 -\X{0}{p-1}\X{1}{p-1} = 0.
			\end{align*}
			But
			$$\X{1}{2p-2}=-\X{1}{p-3}\X{0}{p-1}+\X{1}{p-2}\X{p-1}{p}$$
			So that, 
			\begin{align*}
				& F_2(X_\delta) = \Delta_2 \\
				\Iff & \X{1}{p-2}^2+\X{1}{p-2}\X{p-1}{p}-\X{0}{p-1}\X{1}{p-1}-\X{0}{p-1}\X{1}{p-3} = 0\\
				\Iff & \X{1}{p-2} \left[ \X{1}{p-2}+\X{p-1}{p}\right] - \X{0}{p-1} \left[ \X{1}{p-3}+\X{1}{p-1}\right] = 0.
			\end{align*}
			But Theorem \ref{theorem:multintubes} gives
			$$X_{R_{p-1}}\X{0}{p-1}=\X{1}{p-2}+\X{p-1}{p}$$
			$$\X{1}{p-2}X_{R_{p-1}}=\X{1}{p-3}+\X{1}{p-1}$$
			so that we get 
			$$F_2(X_\delta) = \Delta_2$$
			and the proposition is proved for $l=2$.
		
			For $l >2$, we will use the three term relations for first kind Chebyshev polynomials
			$$F_{l}(x)=xF_{l-1}(x)-F_{l-2}(x).$$ 
			Thus, it is enough to prove that for any $l \geq 2$, 
			$$\Delta_{l+1}=\Delta_1 \Delta_l - \Delta_{l-1}.$$
			In order to simplify notations, we denote by \LHS the left-hand side of the equality and by \RHS the right-hand side of the above equality. We thus have
			\begin{align*}
				\RHS 
					&= (\X{0}{p}-\X{1}{p-2})(\X{0}{lp}-\X{1}{lp-2})-(\X{0}{(l-1)p}-\X{1}{(l-1)p-2})\\
					&= \X 0 p \X 0 {lp} - \X 0 p \X 1 {lp-2} - \X 1 {p-2} \X 0 {lp} \\
					& + \X 1 {p-2} \X 1 {lp -2} - \X 0 {(l-1)p} + \X 1 {(l-1)p -2}.
			\end{align*}
			But, according to the multiplication theorem, we get
			$$\X 0 p \X 0 {lp}=\X 0 {(l+1)p} + \X 0 {p-1} \X 1 {lp -1}$$
			So that 
			\begin{align*}
			\LHS = \RHS & \Leftrightarrow \X 0 {p-1} \X 1 {lp-1} - \X 0 p \X 1 {lp-2} - \X 1 {p-2} \X 0 {lp}\\
				& + \X 1 {p-2} \X 1 {lp-2} - \X 0 {(l-1)p} + \X 1 {(l-1)p-2}+\X 1 {(l+1)p-2} = 0.
			\end{align*}
			Applying the multiplication theorem, we get
			$$\X 0 {lp-2} \X {lp-2} p = \X 0 {(l+1)p-2}+\X 0 {lp-3} \X {lp-1} {p-1}$$
			so that 
			$$\X 1 {(l+1)p-2} = \X {1} {lp -2} \X {p-1} {p} - \X{1}{lp-3} \X{0}{p-1}$$
			and thus
			\begin{align*}
			\LHS = \RHS 
				& \Leftrightarrow \X 0 {p-1} \X 1 {lp-1} - \X 0 p \X 1 {lp-2} \\
				& - \X 1 {p-2} \X 0 {lp} + \X 1 {p-2} \X 1 {lp-2} \\
				& - \X 0 {(l-1)p} + \X 1 {(l-1)p-2}\\
				& + \X {1} {lp -2} \X {p-1} {p} - \X{1}{lp-3} \X{0}{p-1} = 0
			\end{align*}
			but 
			$$\X 0 p \X 1 {lp-2} = \X 0 {lp -1} \X 1 {p-1}+ \X 1 {(l-1)p-2}.$$
			Thus,
			\begin{align*}
			\LHS = \RHS 
				& \Leftrightarrow \X 0 {p-1} \X 1 {lp-1} - \X 0 {lp-1} \X 1 {p-1} \\
				& - \X 1 {p-2} \X 0 {lp} + \X 1 {p-2} \X 1 {lp-2} \\
				&- \X 0 {(l-1)p} + \X 1 {lp-2} \X {p-1} p - \X 1 {lp-3} \X 0 {p-1} =0\\
				& \Leftrightarrow \X 0 {p-1} \X 1 {lp-1} - \X 0 {lp-1} \X 1 {p-1} \\
				& - \X 1 {p-2} \X 0 {lp} - \X 0 {(l-1)p} - \X 1 {lp-3} \X 0 {p-1} \\
				& + \X 1 {lp-2} \left( \X 1 {p-2} + \X {p-1} p \right) = 0.
			\end{align*}
			Theorem \ref{theorem:multintubes} gives
			$$X_{R_{p-1}}\X 0 {p-1} =\X 1 {p-2} + \X {p-1} p$$
			so that 
			\begin{align*}
			\LHS = \RHS 
				& \Leftrightarrow \X 0 {p-1} \left( \X 1 {lp-1} - \X 1 {lp-3}+\X 1 {lp-2} X_{R_{p-1}}\right)\\
				& - \X 0 {(l-1)p} - \X 0 {lp-1} \X 1 {p-1} - \X 1 {p-2} \X 0 {lp} =0
			\end{align*}
			The three term relation for generalized Chebyshev polynomials gives
			$$\X 1 {lp-1}=X_{R_{p-1}}\X 1 {lp-2} - \X 1 {lp-3}.$$
			Thus
			\begin{align*}
			\LHS = \RHS 
				& \Leftrightarrow 2 \X 0 {p-1} \X 1 {lp-1} \\
				& - \left( \X 0 {(l-1)p} + \X 0 {lp-1} \X 1 {p-1} + \X 0 {lp} \X 1 {p-2}\right) = 0
			\end{align*}
			Theorem \ref{theorem:multintubes} gives
			$$\X 0 {p-1}\X 1 {lp-1}=\X 0 {lp} \X 1 {p-2} + \X 0 {(l-1)p}$$
			so that we finally get 
			\begin{align*}
			\LHS = \RHS 
				& \Leftrightarrow \X 0 {p-1} \X 1 {lp-1}- \X 0 {lp-1} \X 1 {p-1} = 0.
			\end{align*}
			The second equality holds by Lemma \ref{lem:keyidentity} so that we proved that for any $l \geq 2$,
			$$\Delta_{l+1}=\Delta_1 \Delta_l - \Delta_{l-1}.$$
			Since we know that $\Delta_1=X_\delta$ and $\Delta_2=F_2(X_\delta)$, it follows that $\Delta_l=F_l(X_\delta)$ for any $l \geq 1$. This finishes the proof.
		\end{proof}

		We are now able to prove the general difference property~:
		\begin{theorem}\label{theorem:generaldifferenceppty}
			Let $Q$ be an affine quiver, $\mathcal T$ be a tube of rank $p \geq 1$ in $\Gamma(\kQ\modg)$. Then for any $l \geq 1$ and any $0 \leq k \leq p-1$, we have
			$$\mathfrak b_{l \delta + \ddim R_0^{(k)}}=\X 0 k F_l(X_\delta)=\X 0 {lp+k} - \X {k+1} {lp-k-2}$$
			with the convention that $\X {0}{-1}=0$.
		\end{theorem}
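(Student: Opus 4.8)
The statement splits into two equalities. The first, $\mathfrak b_{l \delta + \ddim R_0^{(k)}}=\X 0 k F_l(X_\delta)$, is essentially a matter of unravelling the definition of $\mathfrak b_?$. The element of the generic basis $\mathcal G(Q)$ with denominator vector $l\delta+\ddim R_0^{(k)}$ is $X_\delta^l X_{R_0^{(k)}}$ (with the convention $R_0^{(0)}=0$, so that for $k=0$ this reads $X_\delta^l$); by Theorem \ref{theorem:genericbasis} and the construction of $\mathcal G(Q)$ in \cite{Dupont:BaseAaffine,DXX:basesv3} this is the unique such element. Note that $R_0^{(k)}$ is regular rigid precisely because $0 \leq k \leq p-1$ forces $\ddim R_0^{(k)} \lneqq \delta$. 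Applying the bijection $\phi$, which sends $X_\delta^l X_R$ to $F_l(X_\delta)X_R$, then yields $\mathfrak b_{l\delta+\ddim R_0^{(k)}}=F_l(X_\delta)X_{R_0^{(k)}}=\X 0 k F_l(X_\delta)$.

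The content is therefore the second equality, for which I would combine Proposition \ref{prop:diffppthomogene} with two applications of the multiplication theorem (Theorem \ref{theorem:multintubes}). Specialising Proposition \ref{prop:diffppthomogene} to $M=R_0^{(lp)}$ in $\mathcal T$, whose quasi-radical modulo quasi-socle is $R_1^{(lp-2)}$, gives
$$F_l(X_\delta)=\X 0 {lp} - \X 1 {lp-2},$$
so it suffices to expand $\X 0 k \X 0 {lp}$ and $\X 0 k \X 1 {lp-2}$ and subtract. For the first product I would apply Theorem \ref{theorem:multintubes} with first factor $R_0^{(k)}$ and second factor $R_0^{(lp)}$, that is $j=0$, $m=k$, $n=lp$; the constraints $0<tp\leq lp$ and $k \geq lp-tp$ together with $0 \leq k \leq p-1$ single out the summation parameter $t=l$, and the formula yields
$$\X 0 k \X 0 {lp} = \X 0 {lp+k} + \X 0 {lp-1}\X 1 {k-1}.$$
For the second product I would read $\X 0 k \X 1 {lp-2}$ as $\X 1 {lp-2}\X 0 k$ and apply Theorem \ref{theorem:multintubes} with first factor $R_1^{(lp-2)}$ and second factor $R_0^{(k)}$, that is $j=1$, $m=lp-2$, $n=k$; here the constraint $1+tp \leq k \leq p-1$ forces $t=0$, and the formula yields
$$\X 0 k \X 1 {lp-2} = \X 0 {lp-1}\X 1 {k-1} + \X {k+1}{lp-k-2}.$$
Subtracting the second identity from the first makes the common term $\X 0 {lp-1}\X 1 {k-1}$ cancel, leaving exactly $\X 0 {lp+k} - \X {k+1}{lp-k-2}$, as required.

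The step I expect to require the most care is the bookkeeping in these two applications of Theorem \ref{theorem:multintubes}: one must verify that the hypotheses $0<j+tp\leq n$ and $m \geq n-j-tp$ admit a unique value of the summation parameter (namely $t=l$ and $t=0$ respectively), and that the various subscripts and quasi-lengths reduce correctly modulo $p$ (for instance $R_{lp+1}=R_1$ in the first product and $R_{n+1}=R_{k+1}$ in the second). I would also isolate the degenerate cases in which a factor is a zero module or a quasi-length drops below one. Concretely, when $k=0$ one has $\X 0 0 =1$ and the claim reduces directly to Proposition \ref{prop:diffppthomogene} (with $\X {k+1}{lp-k-2}=\X 1 {lp-2}$), while the only small case in which the hypothesis $m>0$ of Theorem \ref{theorem:multintubes} fails for the second product is $l=1$, $p=2$, $k=1$ (where $lp-2=0$); there the identity is read off directly from the conventions $\X 2 {-1}=0$ and $X_{R_i^{(0)}}=1$. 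None of these presents a genuine obstacle, and the argument is a finite, if delicate, computation.
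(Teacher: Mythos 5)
Your proof is correct, and the overall strategy---reduce to Proposition \ref{prop:diffppthomogene} and then invoke Theorem \ref{theorem:multintubes}---is the same as the paper's, but the execution is genuinely different. The paper writes $F_l(X_\delta)=\X k {lp}-\X {k+1} {lp-2}$, centering the decomposition at $R_k$, applies the multiplication theorem once to get $\X 0 k \X k {lp}=\X 0 {lp+k}+\X 0 {k-1}\X {k+1} {lp-1}$, and is then left to prove the auxiliary identity $\X {k+1} {lp-k-2}=\X 0 k \X {k+1} {lp-2}-\X 0 {k-1}\X {k+1} {lp-1}$, which it does by a telescoping induction on the three-term recurrence for generalized Chebyshev polynomials (plus a separate check of the case $l=1$, $k=p-1$). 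You instead center at $R_0$, writing $F_l(X_\delta)=\X 0 {lp}-\X 1 {lp-2}$, and apply the multiplication theorem to both products; the cross terms $\X 0 {lp-1}\X 1 {k-1}$ then cancel on the nose. Both of your applications of Theorem \ref{theorem:multintubes} check out: the summation parameter is indeed forced to be $t=l$ and $t=0$ respectively, and the indices reduce correctly ($R_{lp+1}=R_1$ in the first product, $R_{n+1}=R_{k+1}$ in the second). You also correctly isolate the degenerate cases, namely $k=0$ (where $\X 0 0 =1$ and the claim is Proposition \ref{prop:diffppthomogene} itself) and $(l,p,k)=(1,2,1)$ (where the second factor $\X 1 {lp-2}$ is trivial), which are exactly the instances where the hypothesis $m,n>0$ of the multiplication theorem fails. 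Your treatment of the first equality via denominator vectors matches the paper's. On balance your route trades the paper's inductive identity for a second application of the multiplication theorem and is, if anything, slightly cleaner; the only cost is the explicit bookkeeping of the two special cases, which you handle.
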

		\begin{proof}
			The first equality follows from the fact that 
			$$\den(\X 0 k X_{\delta}^l)=\ddim R_0^{(k)}+l \delta$$
			so that
			$$\mathfrak b_{l \delta + \ddim R_0^{(k)}}=\X 0 k F_l(X_\delta).$$
			We now prove that
			$$\X 0 k F_l(X_\delta)=\X 0 {lp+k} - \X {k+1} {lp-k-2}$$
			with the convention that $\X {0}{-1}=0$.

			We denote by \LHS the left-hand side and by \RHS the right-hand side of the above equation.
			\begin{align*}
				\LHS
					& = \X 0 k \left( \X k {lp} - \X {k+1} {lp-2}\right)\\
					& = \X 0 k \X k {lp} - \X 0 k\X {k+1} {lp-2}\\
					& = \X 0 {lp+k} + \X 0 {k-1} \X {k+1} {lp-1} -\X 0 k\X {k+1} {lp-2}.
			\end{align*}
			If $l=1$ and $k=p-1$, we get 
			\begin{align*}
				\LHS
					& = \X 0 {lp+k} + \X 0 {p-2} \X {0} {p-1} -\X 0 {p-1}\X {0} {p-2}. \\
					& = \X 0 {lp+k} \\
					& = \RHS.
			\end{align*}
	
			Otherwise, $\LHS = \RHS$ if and only if 
			\begin{equation}\label{eq:generaldifferenceppty}
				\X {k+1} {lp-k-2} = \X 0 k \X {k+1} {lp-2} - \X 0 {k-1} \X {k+1} {lp-1}
			\end{equation}
			holds.

			Using the three term recurrence relations for generalized Chebyshev polynomials, we have 
			$$\X 0 k = X_{R_{k-1}}\X 0 {k-1} - \X 0 {k-2}$$
			and 
			$$\X {k+1} {lp-1} = X_{R_{lp+k-1}} \X {k+1} {lp-2} - \X {k+1} {lp-3}$$
			so that, replacing in the right-hand side of equality (\ref{eq:generaldifferenceppty}), we get~:
			\begin{align*}
				\X 0 k \X {k+1} {lp-2} - \X 0 {k-1} \X {k+1} {lp-1}
					 & = \X 0 {k-1} \X {k+1} {lp-3} - \X 0 {k-2}
			\end{align*}
			Thus, by induction, we get 
			$$\X 0 k \X {k+1} {lp-2} - \X 0 {k-1} \X {k+1} {lp-1}= X_{R_0} \X {k+1} {lp-k-1} - \X {k+1} {lp-k}.$$
			Now, the three term recurrence relation gives 
			\begin{align*}
				\X {k+1} {lp-k} 
					& = X_{R_{k+1+lp-k-1}}\X {k+1} {lp-k-1} - \X {k+1} {lp-k-2}\\
					& = X_{R_0}\X {k+1} {lp-k-1} - \X {k+1} {lp-k-2}\\
			\end{align*}
			and thus 
			$$X_{R_0} \X {k+1} {lp-k-1} - \X {k+1} {lp-k}=\X {k+1} {lp-k-2}$$
			so that equality (\ref{eq:generaldifferenceppty}) holds.
		\end{proof}

		As a corollary, for any positive root $\textbf d$ with defect zero, we obtain a description of $\mathfrak b_{\textbf d}$ as a certain difference of cluster characters~:
		\begin{corol}
			Let $Q$ be an affine quiver, $\textbf d$ be a positive root with defect zero. Let $M$ be any indecomposable representation of dimension $\textbf d$. Then, there exists a quasi-simple module $R_0$ in a tube of rank $p \geq 1$, an integer $0 \leq k \leq p-1$ and an integer $l \geq 0$ such that $\textbf d=l\delta+\ddim R_0^{(k)}$. Moreover, for any such $R_0,k,l$, we have 
			$$\mathfrak b_{\textbf d}=\X 0 k F_l(X_\delta)=\X 0 {lp+k} - \X {k+1} {lp-k-2}.$$
			where $R_i$, with $i \in \Z/p\Z$, are the quasi-simple modules in $\mathcal T$ ordered such that $\tau R_i \simeq R_{i-1}$ for every $i \in \Z/p\Z$.
		\end{corol}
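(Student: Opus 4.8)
The plan is to reduce the statement to Theorem \ref{theorem:generaldifferenceppty}, the only genuine work being the representation-theoretic bookkeeping that produces the decomposition $\textbf d=l\delta+\ddim R_0^{(k)}$ and the verification that the resulting value of $\mathfrak b_{\textbf d}$ does not depend on the choices made. First I would produce the decomposition. Since $\textbf d$ is a positive root of defect zero, Kac's theorem guarantees an indecomposable representation $M$ with $\ddim M=\textbf d$, and the defect criterion recalled in Section \ref{section:background} forces $M$ to be regular, hence to lie in some tube $\mathcal T$ of rank $p\geq 1$. Writing $M\simeq R_0^{(m)}$ with $R_0=\qsoc M$ and $m\geq 1$ its quasi-length, I would perform the Euclidean division $m=lp+k$ with $0\leq k\leq p-1$ and $l\geq 0$. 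Using the additivity of dimension vectors along the quasi-composition series of $R_0^{(m)}$ together with the identity $\sum_{j=0}^{p-1}\ddim R_j=\delta$, one computes $\ddim R_0^{(lp+k)}=l\delta+\ddim R_0^{(k)}$, which yields the claimed decomposition $\textbf d=l\delta+\ddim R_0^{(k)}$. Since $0\leq k\leq p-1$ we have $\ddim R_0^{(k)}\lneqq\delta$, so $R_0^{(k)}$ is a rigid regular module (or zero when $k=0$), as required for $\mathfrak b_{\textbf d}$ to be one of the distinguished basis elements.

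The displayed formula then follows immediately: applying Theorem \ref{theorem:generaldifferenceppty} to $R_0,k,l$ gives
$$\mathfrak b_{l\delta+\ddim R_0^{(k)}}=\X 0 k F_l(X_\delta)=\X 0 {lp+k}-\X {k+1}{lp-k-2},$$
and $l\delta+\ddim R_0^{(k)}=\textbf d$ by construction.

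What remains, and what I expect to be the only subtle point, is the clause ``for any such $R_0,k,l$'', i.e.\ the well-definedness of the value across all admissible decompositions. Here I would split into two cases. If $\textbf d$ is real, Kac's theorem makes $M$ the unique indecomposable of dimension $\textbf d$; since modules in distinct tubes are non-isomorphic and a regular indecomposable is determined by its quasi-socle and quasi-length, every admissible decomposition recovers the same tube $\mathcal T$, the same $R_0=\qsoc M$, and the same $l,k$, so there is nothing to check. If $\textbf d$ is imaginary, say $\textbf d=l\delta$, then necessarily $k=0$: any decomposition forces $\ddim R_0^{(k)}=(l_0-l)\delta$ for the relevant multiplier $l_0$, yet $\ddim R_0^{(k)}$ with $0\leq k\leq p-1$ is either $0$ or a real root $\lneqq\delta$, and the latter cannot be a multiple of $\delta$. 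The tube $\mathcal T$, hence $p$ and $R_0$, may nonetheless vary, and the formula reads $\X 0 {lp}-\X 1 {lp-2}$, which a priori depends on the tube. However Proposition \ref{prop:diffppthomogene} asserts exactly that this difference equals $F_l(X_\delta)$ independently of the tube, so all choices give the same value $\mathfrak b_{l\delta}=F_l(X_\delta)$. This appeal to Proposition \ref{prop:diffppthomogene} is the crux: it is what upgrades the choice-dependent identity of Theorem \ref{theorem:generaldifferenceppty} into a well-defined statement about $\mathfrak b_{\textbf d}$.
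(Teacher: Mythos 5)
Your argument is correct and is essentially the paper's own: the corollary is stated there without proof, as a direct specialization of Theorem \ref{theorem:generaldifferenceppty}, and your reduction (Euclidean division $m=lp+k$ of the quasi-length together with $\sum_{j=0}^{p-1}\ddim R_j=\delta$) is exactly the intended one, while your well-definedness discussion, though sound, is not really needed because Theorem \ref{theorem:generaldifferenceppty} is already quantified over every tube and every admissible pair $(k,l)$, so each admissible choice independently evaluates to $\mathfrak b_{\textbf d}$. The one point you pass over silently is $l=0$ (which occurs exactly when $\textbf d\lneqq\delta$): Theorem \ref{theorem:generaldifferenceppty} is stated only for $l\geq 1$, and since $F_0(x)=2$ the middle expression would read $2\,\X{0}{k}$ there, so this case must be treated separately (it reduces to $\mathfrak b_{\textbf d}=X_{R_0^{(k)}}$, a cluster monomial, consistent with the right-hand expression under the convention that negative quasi-lengths contribute $0$) --- a blemish inherited from the corollary's own statement rather than a flaw in your approach.
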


\section{Integrable bundles on $\rep(Q)$ and their characters}\label{section:integrable}
	In the previous section, we obtained a realization of the elements $\mathfrak b_{\textbf d}$ associated to defect zero roots as differences of cluster characters. The aim of this section is to introduce a new map $\theta_{\Tr}$ such that these elements correspond precisely to values of $\theta_{\Tr}$.

	Unless it is otherwise specified, $Q$ denotes an arbitrary acyclic quiver in this section.

	\subsection{Integrable bundles}
		For any $\textbf d \in \N^{Q_0}$, the representation variety $\rep(Q,\textbf d)$ of dimension $\textbf d$ is the set of all representations $M$ of $Q$ with dimension vector $\textbf d$. Note that 
		$$\rep(Q,\textbf d) \simeq \prod_{i \fl j \in Q_1} \Hom_{\k}(\k^{d_i},\k^{d_j})$$
		so that $\rep(Q,\textbf d)$ is an affine irreducible variety.
		
		\begin{defi}
			Let $Q$ be any acyclic quiver. An \emph{integrable bundle} on $\rep(Q)$ is a map 
			$$\mathcal F:M \mapsto \mathcal F(M) \subset \Gr(M)$$
			defined on the set of indecomposable objects in $\rep(Q)$ such that for any $M \in \rep(Q)$ the following hold~:
			\begin{enumerate}
				\item for any $\textbf e \in \N^{Q_0}$, $\mathcal F_{\textbf e}(M) = \mathcal F(M) \cap \Gr_{\textbf e}(M)$ is constructible~;
				\item if $M \simeq N$ in $\rep(Q)$, then $\chi(\mathcal F_{\textbf e}(M)) \simeq \chi(\mathcal F_{\textbf e}(N))$ for any $\textbf e \in \N^{Q_0}$.
			\end{enumerate}
		\end{defi}

		\begin{rmq}
			Note that, if $\mathcal F$ is an integrable bundle on $\rep(Q)$, then the family $(\chi(\mathcal F_{\textbf e}(M)))_{\textbf e \in \N^{Q_0}}$ has finite support.
		\end{rmq}

		\begin{exmp}
			The map $M \mapsto \Gr(M)$ is an integrable bundle called \emph{quiver Grassmannian bundle}. 
		\end{exmp}

		For any $\kQ$-module $M$ and any submodule $U \subset M$, we set
		$$\Gr^U(M)=\ens{N \in \Gr(M) |U \textrm{ is a submodule of }N}.$$
		This is a constructible subset in the quiver Grassmannian $\Gr(M)$.

		If $Q$ is an affine quiver, we define another integrable bundle $\Tr$ as follows. Let $M$ be an indecomposable $\kQ$-module. If $M$ is rigid, we set $\Tr(M)=\Gr(M)$. If $M$ is not rigid, it is regular and we can thus write $M=R_0^{(lp+k)}$ for some quasi-simple module $R_0$ in a tube of rank $p \geq 1$, $l \geq 1$ and $0 \leq k \leq p-1$. There exists a non-zero monomorphism $\iota : R_0^{(lp-1)} \fl R_0^{(lp)}$ such that $\Hom_{\kQ}(R_0^{(lp-1)}, R_0^{(lp)}) \simeq \k \iota$. The set $\iota(\Gr^{R_0^{(k+1)}}(R_0^{(lp-1)}))$ is a constructible subset of $\Gr(R_0^{(lp)})$ and since $\Hom_{\kQ}(R_0^{(lp-1)}, R_0^{(lp)}) \simeq \k \iota$, it does not depend on the choice of $\iota$. We can thus identify $\Gr^{R_0^{(k+1)}}(R_0^{(lp-1)})$ with a constructible subset of $\Gr(R_0^{(lp)})$. With these notations and identifications, we set 
		$$\Tr(M)=\Gr(M) \setminus \Gr^{R_0^{(k+1)}}(R_0^{(lp-1)}).$$
		Note that if $l=0$, $M$ is rigid and we recover the equality $\Tr(M)=\Gr(M)$.  

		For every dimension vector $\textbf e \in \N^{Q_0}$ and any indecomposable $\kQ$-module $M$, the \emph{transverse quiver Grassmannian of $M$} (of dimension $\textbf e$) is the constructible subset of $\Gr_{\textbf e}(M)$~:
		$$\Tr_{\textbf e}(M)=\ens{N \in \Tr(M)| \ddim N = \textbf e}.$$
		The map
		$$\Tr:M \mapsto \Tr(M) \subset \Gr(M)$$
		is an integrable bundle on $\rep(Q)$.

	\subsection{Character associated to an integrable bundle}
		Extending an idea of Caldero and Chapoton, we associate to any integrable bundle on $\rep(Q)$ a map from the set of objects in $\CC_Q$ to the ring $\Z[\textbf u^{\pm 1}]$ of Laurent polynomials in the initial cluster of $\mathcal A(Q)$.

		\begin{defi}
			Let $\mathcal F$ be an integrable bundle on $\rep(Q)$. The \emph{character associated to $\mathcal F$} is the map 
			$$\theta_{\mathcal F}(?):\Ob(\CC_Q) \fl \Z[\textbf u^{\pm 1}]$$
			given by
			\begin{enumerate}
				\item[(a)] If $M \simeq P_i[1]$ for some $i \in Q_0$, then $\theta_{\mathcal F}(P_i[1]) = u_i$~;
				\item[(b)] If $M$ is an indecomposable $\kQ$-module, then 
					$$\theta_{\mathcal F}(M) = \sum_{\textbf e \in \N^{Q_0}} \chi(\mathcal F_{\textbf e}(M)) \prod_{i \in Q_0} u_i^{-\<\textbf e, S_i\>-\<S_i,\ddim M-\textbf e\>}~;$$
				\item[(c)] $\theta_{\mathcal F}(M \oplus N)=\theta_{\mathcal F}(M)\theta_{\mathcal F}(N)$ for any two objects $M,N$ in $\CC_Q$.
			\end{enumerate}
 		\end{defi}

		We now prove that $\theta_{\Tr}$ coincides with $X_?$ on the set of rigid objects in $\CC_Q$. In particular, this will allow to realize cluster monomials in terms of $\theta_{\Tr}$.
		\begin{lem}\label{lem:clustermonomials}
			Let $Q$ be an affine quiver. Then, for any rigid object $M$ in $\CC_Q$, we have $\theta_{\Tr}(M)=X_M$.
			In particular, 
			$$\mathcal M(Q)=\ens{\theta_{\Tr}(M)|M \textrm{ is rigid in }\CC_Q}.$$
		\end{lem}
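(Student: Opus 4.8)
The plan is to show that the transverse quiver Grassmannian $\Tr(M)$ coincides with the ordinary quiver Grassmannian $\Gr(M)$ whenever $M$ is a rigid object, so that the defining sums for $\theta_{\Tr}(M)$ and $X_M$ become literally identical term by term. First I would reduce to the case where $M$ is an indecomposable $\kQ$-module. Indeed, by part (c) of both definitions, $\theta_{\Tr}$ and $X_?$ are multiplicative on direct sums, and by part (a) they agree on the shifted projectives $P_i[1]$; a rigid object in $\CC_Q$ decomposes as a direct sum of indecomposable rigid objects, each of which is either of the form $P_i[1]$ or an indecomposable rigid $\kQ$-module. So it suffices to treat an indecomposable rigid $\kQ$-module $M$.

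Now the key observation is the construction of $\Tr$ itself: for an indecomposable $\kQ$-module, one sets $\Tr(M)=\Gr(M)$ precisely when $M$ is rigid, and only modifies the Grassmannian (by removing the constructible piece $\Gr^{R_0^{(k+1)}}(R_0^{(lp-1)})$) in the non-rigid regular case $l\geq 1$. Thus for $M$ indecomposable rigid we have $\Tr_{\textbf e}(M)=\Gr_{\textbf e}(M)$ for every $\textbf e\in\N^{Q_0}$, hence $\chi(\Tr_{\textbf e}(M))=\chi(\Gr_{\textbf e}(M))$. Comparing the defining formula in part (b) of the definition of $\theta_{\mathcal F}$ (applied to $\mathcal F=\Tr$) with equation (\ref{eq:XM}) for $X_M$, the two expressions coincide since the exponents $-\<\textbf e,S_i\>-\<S_i,\ddim M-\textbf e\>$ are identical. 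This yields $\theta_{\Tr}(M)=X_M$ for indecomposable rigid $M$, and the reduction of the first paragraph extends this to all rigid objects in $\CC_Q$.

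For the final assertion, I would invoke the description of cluster monomials recalled earlier in the excerpt, namely $\mathcal M(Q)=\ens{X_M\,|\,M \textrm{ is rigid in }\CC_Q}$, which follows from Caldero--Keller's theorem \cite{CK2}. Combining this with the equality $\theta_{\Tr}(M)=X_M$ just established for every rigid object $M$ gives immediately
$$\mathcal M(Q)=\ens{\theta_{\Tr}(M)\,|\,M \textrm{ is rigid in }\CC_Q}.$$

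I do not expect any genuine obstacle here: the statement is essentially a bookkeeping consequence of the fact that $\Tr$ was defined so as to agree with $\Gr$ on rigid indecomposables. The only point deserving a line of care is checking that the case $l=0$ (equivalently $\ddim M\lneqq\delta$ for indecomposable regular rigid modules, together with the preprojective and preinjective rigid modules) is genuinely covered by the clause $\Tr(M)=\Gr(M)$, which the construction explicitly records with the remark that $l=0$ recovers $\Tr(M)=\Gr(M)$. Hence the verification is complete.
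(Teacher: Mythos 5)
Your proposal is correct and follows essentially the same route as the paper: decompose the rigid object into shifted projectives and indecomposable rigid modules, note that $\Tr(M_i)=\Gr(M_i)$ by the very definition of $\Tr$ on rigid indecomposables so the defining sums agree, and conclude by multiplicativity together with the Caldero--Keller description of cluster monomials. No gaps.
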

		\begin{proof}
			Let $M$ be a rigid object in $\CC_Q$. We write $M=P_{i_1}[1] \oplus \cdots \oplus P_{i_r}[1] \oplus M_1 \oplus \cdots \oplus M_s$ where each $P_{i_j}$ is an indecomposable projective $\kQ$-module and each $M_i$ is an indecomposable module. Moreover, since $M$ is rigid, each $M_i$ is a rigid $\kQ$-module and thus $\Tr(M_i)=\Gr(M_i)$ for any $i \in \ens{1, \ldots, s}$. In particular, it follows that $\theta_{\Tr}(M_i)=X_{M_i}$ for any $i \in \ens{1, \ldots, s}$. Then, 
			\begin{align*}
				\theta_{\Tr}(M)
					& = \theta_{\Tr}(P_{i_1}[1] \oplus \cdots \oplus P_{i_r}[1] \oplus M_1 \oplus \cdots \oplus M_s) \\
					& = \theta_{\Tr}(P_{i_1}[1]) \cdots \theta_{\Tr}(P_{i_r}[1]) \theta_{\Tr}(M_1) \cdots \theta_{\Tr}(M_s) \\
					& = u_{i_1} \cdots u_{i_r} X_{M_1} \cdots X_{M_s}\\
					& = X_{P_{i_1}[1] \oplus \cdots \oplus P_{i_r}[1] \oplus M_1 \oplus \cdots \oplus M_s}\\
					& = X_M
			\end{align*}
			The second assertion follows directly from Caldero-Keller's realization of cluster monomials~:
			\begin{align*}
				\mathcal M(Q) 
					&=\ens{X_M|M \textrm{ is rigid in }\CC_Q}\\
					&=\ens{\theta_{\Tr}(M)|M \textrm{ is rigid in }\CC_Q}.
			\end{align*}
		\end{proof}

\section{A geometrization of $\mathcal B(Q)$}\label{section:realization}
	We now relate the transverse character with the difference properties obtained in Section \ref{section:differences}. This will provide a realization of the elements in $\mathcal B(Q)$ in terms of $\theta_{\Tr}$.

	\subsection{From difference properties to $\theta_{\Tr}$}
		Using Theorem \ref{theorem:generaldifferenceppty}, we first deduce a realization in terms of $\theta_{\Tr}$ of the elements in $\mathcal B(Q)$ corresponding to positive roots with zero defect~:
		\begin{theorem}\label{theorem:zerodefect}
			Let $\textbf d$ be any positive root. Then
			$$\mathfrak b_{\textbf d}=\theta_{\Tr}(M)$$
			where $M$ is any indecomposable representation of dimension $\textbf d$.
		\end{theorem}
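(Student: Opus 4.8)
The plan is to split according to the defect of $\textbf d$ and to reduce the essential (zero-defect) case to the difference properties of Section \ref{section:differences} by an additivity argument for the Euler characteristic.

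First I would dispose of the case where the indecomposable $M$ of dimension $\textbf d$ is rigid; this happens exactly when $\textbf d$ has nonzero defect, or when $\textbf d$ is a zero-defect root with $\textbf d \lneqq \delta$. In that situation Lemma \ref{lem:clustermonomials} gives $\theta_{\Tr}(M)=X_M$, and since $X_M$ is then a cluster variable with $\den(X_M)=\textbf d$, the denominator bijection of Theorem \ref{theorem:genericbasis} together with the fact that $\phi$ fixes cluster monomials forces $\mathfrak b_{\textbf d}=X_M=\theta_{\Tr}(M)$. This settles all rigid cases at once.

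There remains the non-rigid regular case $M=R_0^{(lp+k)}$ with $l\geq 1$ and $0\leq k\leq p-1$ in a tube of rank $p$. By construction $\Tr(M)=\Gr(M)\setminus \Gr^{R_0^{(k+1)}}(R_0^{(lp-1)})$, where the removed set is identified with the lattice interval $\ens{N\in\Gr(M)\,|\,R_0^{(k+1)}\subseteq N\subseteq R_0^{(lp-1)}}$ through the embedding of $R_0^{(lp-1)}$ as the quasi-length-$(lp-1)$ submodule of $M$ with quasi-socle $R_0$. Because the Euler characteristic is additive on a partition into constructible subsets, for each $\textbf e$ one has $\chi(\Gr_{\textbf e}(M))=\chi(\Tr_{\textbf e}(M))+\chi(Z_{\textbf e})$, where $Z$ denotes the interval; hence $\theta_{\Tr}(M)=X_M-Y$ with $Y$ the generating series of $Z$.

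The core step is to evaluate $Y$. The lattice isomorphism $N\mapsto N/R_0^{(k+1)}$ identifies $Z$, as a variety, with $\Gr(R_0^{(lp-1)}/R_0^{(k+1)})=\Gr(R_{k+1}^{(lp-k-2)})$, shifting every dimension vector by $\ddim R_0^{(k+1)}$. I would then check that the monomial exponent attached in $Y$ to a submodule of dimension $\textbf e=\ddim R_0^{(k+1)}+\textbf f$ agrees with the one attached to a submodule of dimension $\textbf f$ of $R_{k+1}^{(lp-k-2)}$. Using the decomposition $\ddim M=\ddim R_0^{(k+1)}+\ddim R_{k+1}^{(lp-k-2)}+\ddim R_{lp-1}^{(k+1)}$ and expanding the bilinear form, this comparison collapses to the single identity $\<\ddim R_0^{(k+1)},S_i\>+\<S_i,\ddim R_{lp-1}^{(k+1)}\>=0$ for all $i$, which I would obtain by summing the relation $\<\ddim R_j,S_i\>=-\<S_i,\ddim R_{j-1}\>$ (coming from $\ddim\tau R_j=\ddim R_{j-1}$ and the Coxeter form of the hereditary algebra $\kQ$) over the quasi-simple composition factors. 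This yields $Y=\X {k+1}{lp-k-2}$, so that $\theta_{\Tr}(M)=\X 0 {lp+k}-\X {k+1}{lp-k-2}$, which equals $\mathfrak b_{\textbf d}$ by Theorem \ref{theorem:generaldifferenceppty} (and Proposition \ref{prop:diffppthomogene} when $k=0$). I expect the exponent-matching identity to be the only genuine obstacle: the lattice correspondence, the additivity of $\chi$, and the appeal to the difference property are all formal. Note that the tube-independence emphasized in the introduction then comes for free, since the resulting value is $F_l(X_\delta)$.
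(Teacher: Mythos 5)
Your proposal is correct and follows essentially the same route as the paper: dispose of the rigid cases via Lemma \ref{lem:clustermonomials} and the denominator bijection, then for $M=R_0^{(lp+k)}$ use additivity of $\chi$ on the partition $\Gr(M)=\Tr(M)\sqcup\Gr^{R_0^{(k+1)}}(R_0^{(lp-1)})$, identify the removed piece with $\Gr(R_{k+1}^{(lp-k-2)})$ via $N\mapsto N/R_0^{(k+1)}$, match the Laurent monomials using the Coxeter relation $\<\gamma,c(\beta)\>=-\<\beta,\gamma\>$, and invoke Theorem \ref{theorem:generaldifferenceppty}. Your reformulation of the exponent check as the single identity $\<\ddim R_0^{(k+1)},S_i\>+\<S_i,\ddim R_{p-1}^{(k+1)}\>=0$ is just a repackaging of the paper's computation, not a different argument.
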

		\begin{proof}
			If $\textbf d$ is a positive root with non-zero defect, then $\textbf d$ is real and there exists a unique indecomposable representation $M$ in $\rep(Q,\textbf d)$. Moreover, this representation has to be preprojective or preinjective. In both cases, it is rigid and thus $\mathfrak b_{\textbf d}=X_M=\theta_{\Tr}(M)$. We can thus assume that $\textbf d \in \N^{Q_0}$ is a root with zero defect.
			
			Let $M$ be an indecomposable representation in $\rep(Q,\textbf d)$. It is necessarily contained in a tube $\mathcal T$ of rank $p \geq 1$. We denote by $R_i$, with $i \in \Z/p\Z$ the quasi-simple modules in $\mathcal T$ ordered such that $\tau R_i \simeq R_{i-1}$ for any $i \in \Z/p\Z$. We can write $\textbf d = l \delta + \textbf n$ where $\textbf n$ is either a real Schur root or zero. 
			If $\textbf n\neq 0$, there exists a unique indecomposable representation $N$ in $\rep(Q,\textbf n)$. In any case, if $M \simeq R_0^{(lp+k)}$ with $l \geq 0$ and $0 \leq k \leq p-1$, $N$ is the rigid representation $R_0^{(k)}$ (still with the convention that $R_0^{(0)}=0$) and
			$$\mathfrak b_{\textbf d}=X_{R_0^{(k)}} F_l(X_\delta).$$
			Now according to Theorem \ref{theorem:generaldifferenceppty}, we have 
			$$X_{R_0^{(k)}} F_l(X_\delta)=\X 0 {lp+k} - \X {k+1} {lp-k-2}$$
			For any $\textbf e \in \N^{Q_0}$, the map 
			$$\left\{\begin{array}{rcl}
					\Gr^{R_0^{(k+1)}}_{\textbf e}(R_0^{(lp-1)}) & \fl & \Gr_{\textbf e - \ddim R_0^{(k+1)}}(R_{k+1}^{(lp-k-2)})\\
					U & \mapsto & U/R_0^{(k+1)}
			\end{array}\right.$$
			is an algebraic isomorphism and we denote by $c_{\textbf e} \in \Z$ the common value of the Euler characteristics of these constructible sets.
			Fix now some $\textbf e \in \N^{Q_0}$, the monomial corresponding to $\textbf e$ in $\X 0 {lp+k}$ is
			$$c_{\textbf e}\prod_i u_i^{-\< \textbf e, S_i\>-\<S_i,\ddim R_0^{(lp+k)} - \textbf e\>}$$
			and the monomial corresponding to $\textbf e - \ddim R_0^{(k+1)}$ in $\X {k+1} {lp-k-2}$ is 
			$$c_{\textbf e}\prod_i u_i^{-\< \textbf e-\ddim R_0^{(k+1)}, S_i\>-\<S_i,\ddim R_{k+1}^{(lp-k-2)}+\ddim R_0^{(k+1)} - \textbf e\>}.$$
			We now prove that these monomials are the same.
			For any $i=0, \ldots, p-1$, we set by $r_i=\ddim R_i$ and we denote by $c$ the Coxeter transformation on $\Z^{Q_0}$ induced by the Auslander-Reiten translation. We recall that for any $\beta, \gamma \in \Z^{Q_0}$, we have $\<\gamma, c(\beta) \>=-\<\beta, \gamma\>$.
			With these notations, we have 
			\begin{align*}
				\ddim R_0^{(k+1)}	 & =r_0 + \cdots + r_k\\
				\ddim R_{k+1}^{(lp-k-2)} & =(l-1)\delta + r_{k+1} + \cdots r_{p-2}
			\end{align*}
			so that 
			$$\ddim R_0^{(k+1)} + \ddim R_{k+1}^{(lp-k-2)}= l\delta - r_{p-1}.$$
			We now compute the exponents~:
			\begin{align*}
				& -\< \textbf e, S_i\>-\<S_i,\ddim R_0^{(lp+k)} - \textbf e\>\\
				& = -\<\textbf e, S_i\>-\<S_i, l\delta+r_0+\cdots + r_{k-1}-\textbf e\>\\
				& = -\<\textbf e, S_i\>-\<S_i, l\delta-\textbf e\>-\<S_i,r_0+\cdots + r_{k-1}\>\\
				& = -\<\textbf e, S_i\>-\<S_i, l\delta-\textbf e\>-\<r_1+\cdots + r_{k},S_i\>
			\end{align*}
			and
			\begin{align*}
				& -\< \textbf e-\ddim R_0^{(k+1)}, S_i\>-\<S_i,\ddim R_{k+1}^{(lp-k-2)}+\ddim R_0^{(k+1)} - \textbf e\>\\
				& = -\< \textbf e, S_i\>+\<r_0+\cdots+r_k, S_i\>+\<S_i,r_{p-1}\>-\<S_i,l \delta-\textbf e\>\\
				& = -\< \textbf e, S_i\>-\<S_i,l \delta-\textbf e\>+\<r_1+\cdots+r_k, S_i\>
			\end{align*}
			so that the two monomials are the same. Thus, 
			\begin{align*}
				& \X 0 {lp+k} - \X {k+1} {lp-k-2}\\
					& = \sum_{\textbf e} \chi(\Gr_{\textbf e}(R_0^{(lp+k)})) \prod_i u_i^{ -\<\textbf e, S_i\>-\<S_i, l\delta-\textbf e\>-\<r_1+\cdots + r_{k},S_i\>}\\
					& - \sum_{\textbf e} \chi(\Gr_{\textbf e-\ddim R_0^{(k+1)}}(R_{k+1}^{(lp-k-2)})) \prod_i u_i^{ -\<\textbf e, S_i\>-\<S_i, l\delta-\textbf e\>-\<r_1+\cdots + r_{k},S_i\>}\\
					& = \sum_{\textbf e} \chi \left(\Gr_{\textbf e}(R_0^{(lp+k)}) \setminus \Gr_{\textbf e}^{R_0^{(k+1)}}(R_0^{(lp-1)})\right) \prod_i u_i^{ -\<\textbf e, S_i\>-\<S_i, l\delta-\textbf e\>-\<r_1+\cdots + r_{k},S_i\>}\\
					& = \sum_{\textbf e} \chi(\Tr_{\textbf e}(R_0^{(lp+k)})) \prod_i u_i^{ -\<\textbf e, S_i\>-\<S_i, \ddim R_0^{(lp+k)}-\textbf e\>}.
			\end{align*}
			This finishes the proof.
		\end{proof}

	\subsection{Realization of $\mathcal B(Q)$ in terms of $\theta_{\Tr}$}
		Summing up the previous results, we deduce the following geometric description of $\mathcal B(Q)$~:
		\begin{theorem}\label{theorem:BQ}
			Let $Q$ be an affine quiver, then 
			\begin{align*}
				\mathcal B(Q) =
				&\left\{ \theta_{\Tr}(M \oplus R) | M \textrm{ is an indecomposable (or zero) regular $\kQ$-module,}\right.\\
				& \left. \textrm{ $R$ is any rigid object in $\CC_Q$ such that $\Ext^1_{\CC_Q}(M,R)=0$}\right\}.	
			\end{align*}
		\end{theorem}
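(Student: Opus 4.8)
The plan is to show that the proposed set on the right-hand side coincides, element by element, with the definition of $\mathcal B(Q)$ from Section \ref{section:background}, namely $\mathcal B(Q)=\mathcal M(Q) \sqcup \ens{F_l(X_\delta)X_R \mid l \geq 1, R \textrm{ regular rigid}}$. The strategy is to decompose any pair $(M,R)$ appearing in the right-hand side according to whether $M=0$ or $M\neq 0$, and in each case identify $\theta_{\Tr}(M\oplus R)$ with an explicit element of $\mathcal B(Q)$ using the multiplicativity property (c) of $\theta_{\Tr}$ together with Lemma \ref{lem:clustermonomials} and Theorem \ref{theorem:zerodefect}.

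First I would handle the case $M=0$. Then the pair reduces to $\theta_{\Tr}(R)$ for $R$ an arbitrary rigid object in $\CC_Q$, and Lemma \ref{lem:clustermonomials} gives $\theta_{\Tr}(R)=X_R \in \mathcal M(Q)$; conversely every cluster monomial arises this way. This identifies the $M=0$ part of the right-hand side exactly with $\mathcal M(Q)$. Next, suppose $M$ is a nonzero indecomposable regular module, necessarily living in some tube $\mathcal T$ of rank $p$, so $M \simeq R_0^{(lp+k)}$ with $l\geq 0$ and $0\leq k\leq p-1$. By multiplicativity, $\theta_{\Tr}(M\oplus R)=\theta_{\Tr}(M)\,\theta_{\Tr}(R)=\theta_{\Tr}(M)\,X_R$, where the last equality again uses Lemma \ref{lem:clustermonomials} since $R$ is rigid. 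Theorem \ref{theorem:zerodefect} identifies $\theta_{\Tr}(M)=\mathfrak b_{\ddim M}=X_{R_0^{(k)}}F_l(X_\delta)$ (an indecomposable regular module has defect zero, so $\ddim M$ is a positive root of defect zero and the theorem applies). Writing $R' = R_0^{(k)}\oplus R$, which is again rigid, I obtain $\theta_{\Tr}(M\oplus R)=F_l(X_\delta)\,X_{R'}$.

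The remaining work is purely bookkeeping about which values of $l$ and which rigid modules $R'$ are realized, and the main obstacle is matching the two index sets without omission or repetition. When $l\geq 1$ this produces exactly the elements $F_l(X_\delta)X_{R'}$ with $R'$ regular rigid; here I must verify that the regular rigid summand $R_0^{(k)}$ combines correctly with the rigid part of $R$, using the hypothesis $\Ext^1_{\CC_Q}(M,R)=0$ to guarantee that $R_0^{(k)}\oplus R$ is genuinely rigid (compatibility of the quasi-simple data of $M$ with $R$ is precisely what this Ext-vanishing encodes, ensuring $R_0^{(k)}$ has no self-extensions with $R$). When $l=0$, $M=R_0^{(k)}$ is itself rigid regular and $\theta_{\Tr}(M\oplus R)=X_{M\oplus R}$ falls back into $\mathcal M(Q)$, consistent with the $M=0$ analysis. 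Finally I would check that every element $F_l(X_\delta)X_S$ of $\mathcal B(Q)$ with $S$ regular rigid is hit: decomposing $S$ into its tube components and splitting off an indecomposable nonzero regular representative $M$ of dimension $l\delta$ plus the rigid remainder realizes it, and the $\Ext^1$-vanishing condition is automatic for this choice. The hardest point to get right is ensuring the correspondence is a genuine bijection rather than merely a surjection, which is why I would lean on the denominator map: by Caldero-Keller's denominator theorem and Theorem \ref{theorem:genericbasis}, distinct pairs yield distinct dimension vectors $l\delta + \ddim R'$, so $\den$ separates the elements and no collisions occur.
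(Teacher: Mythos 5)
Your proposal is correct and follows essentially the same route as the paper's proof: split on $M=0$ versus $M$ rigid versus $M$ non-rigid, reduce via multiplicativity and Lemma \ref{lem:clustermonomials} to Theorem \ref{theorem:zerodefect}, and for the reverse inclusion realize $F_l(X_\delta)X_S$ by pairing $S$ with an indecomposable $M$ of dimension $l\delta$ --- the paper makes the one point you leave implicit explicit, namely that this $M$ must be chosen in a \emph{homogeneous} tube so that $\Ext^1_{\CC_Q}(M,S)=0$ is automatic (it would fail for $M$ in an exceptional tube meeting a summand of $S$). Your closing injectivity argument via denominator vectors is superfluous, since the statement is a set equality and mutual inclusion suffices.
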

		\begin{proof}
			We denote by $\mathcal S$ the right-hand-side of the claimed equality. 
			By definition, we have 
			$$\mathcal B(Q)=\mathcal M(Q) \sqcup \ens{F_l(X_\delta)X_R|l \geq 1, R \textrm{ is a regular rigid $\kQ$-module}}.$$
			We first prove that every $\mathcal S \subset \mathcal B(Q)$. Let $R$ be a rigid object in $\CC_Q$, then $\theta_{\Tr}(R)$ is a cluster monomial by Lemma \ref{lem:clustermonomials}. Fix now $M$ to be an indecomposable regular $\kQ$-module in a tube $\mathcal T$ such that $\Ext^1_{\CC_Q}(M,R)=0$. If $M$ is rigid, then $M \oplus R$ is rigid in $\CC_Q$ and $\theta_{\Tr}(M)\theta_{\Tr}(R)=\theta_{\Tr}(M \oplus R)$ is a cluster monomial by Lemma \ref{lem:clustermonomials}. Now, if $M$ is non-rigid, then $\textbf d =\ddim M$ is a positive root of defect zero, thus, $\theta_{\Tr}(M)=\mathfrak b_{\textbf d}$ by Theorem \ref{theorem:zerodefect}. Thus, there exists $l \geq 1$ and $N$ a indecomposable rigid (or zero) module in $\mathcal T$ such that $\textbf d=l\delta+\ddim N$. According to Theorem \ref{theorem:zerodefect}, we have 
			\begin{align*}
				\theta_{\Tr}(M \oplus R)
					& = \theta_{\Tr}(M) \theta_{\Tr}(R)\\
					& = \mathfrak b_{l\delta+\ddim N}\theta_{\Tr}(R)\\
					& = F_l(X_\delta)X_N\theta_{\Tr}(R)\\
					& = F_l(X_\delta)\theta_{\Tr}(N)\theta_{\Tr}(R)\\
					& = F_l(X_\delta)\theta_{\Tr}(N \oplus R).
			\end{align*}
			Since $\Ext^1_{\CC_Q}(M,R)=0$, we have $\Ext^1_{\kQ}(M,R)=0$ and $\Ext^1_{\kQ}(R,M)=0$. Thus, it follows easily that $\Ext^1_{\kQ}(N,R)=0$ and $\Ext^1_{\kQ}(R,N)=0$ so that $N \oplus R$ is a rigid regular $\kQ$-module. In particular, $\theta_{\Tr}(N \oplus R)=X_{N \oplus R}$ and thus 
			$$\theta_{\Tr}(M \oplus R)=F_l(X_\delta)X_{N \oplus R} \in \mathcal B(Q).$$

			Conversely, fix an elements in $\mathcal B(Q)$. If $x$ is a cluster monomial, then according to Lemma \ref{lem:clustermonomials}, there exists some rigid object $M$ in $\CC_Q$ such that $x=\theta_{\Tr}(M)$. Thus, $x \in \mathcal S$. Fix now some regular rigid $\kQ$-module $R$ and some integer $l \geq 1$. Then the direct summands of $R$ belong to exceptional tubes. We fix an indecomposable $\kQ$-module $M$ of dimension vector $l \delta$ in a homogeneous tube. Then, $\Ext^1_{\CC_Q}(M,R)=0$. According to Theorem \ref{theorem:zerodefect}, we have $F_l(X_\delta)X_R=\theta_{\Tr}(M)X_R$ but $R$ is rigid so that $X_R=\theta_{\Tr}(R)$. Thus, 
			$$F_l(X_\delta)X_R=\theta_{\Tr}(M)\theta_{\Tr}(R)=\theta_{\Tr}(M \oplus R) \in \mathcal S.$$
			This finishes the proof.
		\end{proof}

\section{Examples}\label{section:examples}
	We shall now study two examples corresponding to cases where it is known that $\mathcal B(Q)$ is the canonically positive basis in $\mathcal A(Q)$.
	
	\subsection{The $\Aaffine_{1,1}$ case}
		Let $Q$ be the Kronecker quiver, that is, the affine quiver of type $\Aaffine_{1,1}$ with the following orientation~:
		$$\xymatrix{
			Q: & 1 \ar@<+2pt>[r]\ar@<-2pt>[r] & 2 
		}$$
		with minimal imaginary root $\delta=(11)$.

		For any $\lambda \in \k$, we set
		$$\xymatrix{
			M_\lambda: & \k \ar@<+2pt>[r]^1 \ar@<-2pt>[r]_\lambda & \k
		}$$
		and 
		$$\xymatrix{
			M_\infty: & \k \ar@<+2pt>[r]^0 \ar@<-2pt>[r]_1 & \k.
		}$$
		It is well-known that every tube in $\Gamma(\kQ\modg)$ is homogeneous and that the family $\ens{M_\lambda|\lambda \in \k \sqcup \ens{\infty}}$ is a complete set of representatives of pairwise non-isomorphic quasi-simple $\kQ$-modules. 

		For any $n \geq 1$, the indecomposable representations of quasi-length $n$ are given by 
		$$\xymatrix{
			M_\lambda^{(n)}: & \k^n \ar@<+2pt>[r]^1 \ar@<-2pt>[r]_{J_n(\lambda)} & \k^n
		}$$
		for any $\lambda \in \k$ and 
		$$\xymatrix{
			M_\infty^{(n)}: & \k^n \ar@<+2pt>[r]^{J_n(0)} \ar@<-2pt>[r]_1 & \k^n
		}$$
		where $J_n(\lambda) \in M_n(\k)$ denotes the Jordan block of size $n$ associated to the eigenvalue $\lambda$. Quiver Grassmannians and transverse quiver Grassmannians of indecomposable representations with quasi-length 2 are described in Figure \ref{figure:TrA11} below. 

		Note that $\kQ$-mod contains no regular rigid modules. It follows that in this case
		$$\mathcal B(Q)=\mathcal M(Q) \sqcup \ens{\theta_{\Tr}(M)|M \textrm{ is an indecomposable regular $\kQ$-module}}.$$
		According to \cite{shermanz}, this set is the canonically positive basis of $\mathcal A(Q)$.

		\begin{landscape}
			\begin{figure}
				$$\begin{array}{|r||cc|cc|cc||c|}
					\hline
					\textbf e 
						& \Gr_{\textbf e}(M_0^{(2)}) & \Tr_{\textbf e}(M_0^{(2)}) 
						& \Gr_{\textbf e}(M_\lambda^{(2)}) & \Tr_{\textbf e}(M_\lambda^{(2)}) 
						& \Gr_{\textbf e}(M_\infty^{(2)}) & \Tr_{\textbf e}(M_\infty^{(2)})
						& \textbf u^{\<-\textbf e,S_i\>-\<S_i,2\delta-\textbf e\>}\\
					\hline
					(00)	& \ens 0 & \ens 0 
						& \ens 0 & \ens 0 
						& \ens 0 & \ens 0 
						& \displaystyle \frac{u_1^2}{u_2^2}\\ &&&&&&&\\
					(01)	& \P^1 \times \ens{S_2} & \P^1 \times \ens{S_2}
						& \P^1 \times \ens{S_2} & \P^1 \times \ens{S_2}
						& \P^1 \times \ens{S_2} & \P^1 \times \ens{S_2}
						& \displaystyle \frac{1}{u_2^2}\\ &&&&&&&\\
					(02)	& \ens {S_2\oplus S_2} & \ens {S_2\oplus S_2}
						& \ens {S_2\oplus S_2} & \ens {S_2\oplus S_2}
						& \ens {S_2\oplus S_2} & \ens {S_2\oplus S_2}
						& \displaystyle \frac{1}{u_1^2u_2^2}\\ &&&&&&&\\
					(11)	& \ens {M_0} & \emptyset
						& \ens {M_\lambda} & \emptyset
						& \ens {M_\infty} & \emptyset
						& \displaystyle 1 \\ &&&&&&&\\
					(12)	& \ens {P_1,M_0 \oplus S_2} & \ens {P_1,M_0 \oplus S_2}
						& \P^1 \times \ens{M_\lambda \oplus S_2} & \P^1 \times \ens{M_\lambda \oplus S_2}
						& \ens {P_1,M_\infty \oplus S_2} & \ens {P_1,M_\infty \oplus S_2}
						& \displaystyle \frac{1}{u_1^2}\\ &&&&&&&\\
					(22)	& \ens {M_0^{(2)}} & \ens {M_0^{(2)}}
						& \ens {M_\lambda^{(2)}} & \ens {M_\lambda^{(2)}}
						& \ens {M_\infty^{(2)}} & \ens {M_\infty^{(2)}}
						& \displaystyle \frac{u_2^2}{u_1^2}\\ &&&&&&&\\
					\hline
				\end{array}$$
				\caption{Grassmannians and transverse Grassmannians of indecomposable modules of quasi-length 2 in type $\Aaffine_{1,1}$}\label{figure:TrA11}
			\end{figure}
		\end{landscape}
		From Figure \ref{figure:TrA11}, we see that for any $\lambda \in \k \sqcup \ens{\infty}$, 
		$$
			\theta_{\Tr}(M_\lambda^{(2)})
				=\theta_{\Gr}(M_\lambda^{(2)})-1
				= X_{M_\lambda^{(2)}}-1
				= S_2(X_{M_\lambda})-1
				= F_2(X_{M_\lambda})
				= \mathfrak b_{2\delta}$$
		This illustrates Theorem \ref{theorem:zerodefect}.

	\subsection{The $\Aaffine_{2,1}$ case}
		We now consider the quiver $Q$ of affine type $\Aaffine_{2,1}$ equipped with the following orientation~:
		$$\xymatrix{
			&& 2 \ar[rd] \\
			Q & 1 \ar[rr] \ar[ru] && 3
		}$$
		The minimal imaginary root of $Q$ is $\delta=(111)$.
		For any $\lambda \in \k$, we set 
		$$\xymatrix{
			&& \k \ar[rd]^{\lambda} \\
			M_\lambda & \k \ar[rr]^1 \ar[ru]^1 && \k
		}$$
		and 
		$$\xymatrix{
			&& \k \ar[rd]^{1} \\
			M_\infty & \k \ar[rr]^0 \ar[ru]^1 && \k.
		}$$
		$\Gamma(\kQ\modg)$ contains exactly one exceptional tube $\mathcal T$ of rank 2 whose quasi-simples are 
		$$\xymatrix{
			&& 0 \ar[rd] \\
			R_0 & \k \ar[rr]^1 \ar[ru]^0 && \k
		}$$
		and 
		$$\xymatrix{
			&& \k \ar[rd]^0 \\
			R_1 \simeq S_2 & 0 \ar[rr] \ar[ru] && 0.
		}$$

		The set $\ens{M_\lambda | \lambda \in \k \sqcup \ens{\infty}} \sqcup \ens{R_0^{(2)}}$ is a complete set of representatives of pairwise non-isomorphic indecomposable representations in $\rep(Q,\delta)$. For any $\lambda \neq 0,\infty$, $M_\lambda$ is a quasi-simple $\kQ$-module in a homogeneous tube. Moreover, $M_0=R_1^{(2)}$ and $M_\infty$ is quasi-simple in a homogeneous tube.

		Quiver Grassmannians and transverse quiver Grassmannians of indecomposable representations of dimension $\delta$ are described in Figure \ref{figure:TrA21} below. For simplicity, we only listed the dimension vectors corresponding giving non-empty quiver Grassmannians.

		\begin{landscape}
			\begin{figure}
				$$\begin{array}{|r||cc|cc|cc|cc||c|}
					\hline
					\textbf e 
						& \Gr_{\textbf e}(M_\lambda) & \Tr_{\textbf e}(M_\lambda)
						& \Gr_{\textbf e}(M_0) & \Tr_{\textbf e}(M_0)
						& \Gr_{\textbf e}(R_0^{(2)}) & \Tr_{\textbf e}(R_0^{(2)})
						& \Gr_{\textbf e}(M_\infty) & \Tr_{\textbf e}(M_\infty)
						& \textbf u^{\<-\textbf e,S_i\>-\<S_i,\delta-\textbf e\>}\\
					\hline
					(000)	& \ens 0 & \ens 0 
						& \ens 0 & \ens 0 
						& \ens 0 & \ens 0 
						& \ens 0 & \ens 0 
						& \displaystyle \frac{u_1}{u_3}\\ &&&&&&&&&\\
					(001)	& \ens{S_3} & \ens{S_3}
						& \ens{S_3} & \ens{S_3}
						& \ens{S_3} & \ens{S_3}
						& \ens{S_3} & \ens{S_3}
						& \displaystyle \frac{1}{u_2u_3}\\ &&&&&&&&&\\
					(010)	& \emptyset & \emptyset
						& \ens {S_2} & \emptyset
						& \emptyset & \emptyset
						& \emptyset & \emptyset
						& \displaystyle 1\\ &&&&&&&&&\\
					(011)	& \ens {P_2} & \ens {P_2}
						& \ens{S_2 \oplus S_3} & \ens{S_2 \oplus S_3}
						& \ens {P_2} & \ens {P_2}
						& \ens {P_2} & \ens {P_2}
						& \displaystyle \frac{1}{u_1u_2}\\ &&&&&&&&&\\
					(101)	& \emptyset & \emptyset
						& \emptyset & \emptyset
						& \ens {R_0} & \emptyset
						& \emptyset & \emptyset
						& \displaystyle 1\\ &&&&&&&&&\\
					(111)	& \ens {M_\lambda} & \ens {M_\lambda}
						& \ens{M_0} & \ens{M_0}
						& \ens{R_0^{(2)}} & \ens{R_0^{(2)}}
						& \ens {M_\infty} & \ens {M_\infty}
						& \displaystyle \frac{u_3}{u_1}\\ &&&&&&&&&\\
					\hline
				\end{array}$$
				\caption{Grassmannians and transverse Grassmannians for quasi-length 2 in type $\Aaffine_{2,1}$}\label{figure:TrA21}
			\end{figure}
		\end{landscape}

		In Figure \ref{figure:TrA21}, we observe that $X_{M_\lambda}=X_{M_0}-1=X_{M_\infty}-1$, illustrating Theorem \ref{theorem:diffppt}. Also, we see that $\theta_{\Tr}(M_\lambda)=\theta_{\Tr}(M_0)=\theta_{\Tr}(M_\infty)$ for any $\lambda \in \k \setminus \ens 0$ so that the transverse character does not depend on the chosen tube. Moreover, 
		$$\theta_{\Tr}(M_\lambda)=X_{M_\lambda}=F_1(X_\delta)$$
		illustrating Theorem \ref{theorem:zerodefect}.

		\begin{rmq}
			Figure \ref{figure:TrA21} justifies the terminology ``transverse submodule''. Indeed, we see that, given two indecomposable regular modules $M$ and $N$ having the same dimension vectors, the submodules $U$ in $\Tr(M)$ are those having a corresponding submodule in $\Gr(N)$. In some sense, we can see $U$ as a submodule ``common'' to $M$ and $N$. This is why we call it \emph{transverse}. 

			As suggested by Bernhard Keller, this notion of transversality should have a more precise meaning in the context of deformation theory. Some connections are known at this time, this should be discussed in a forthcoming article.
		\end{rmq}

\section*{Acknowledgements}
	This paper was written while the author was at the university of Sherbrooke as a CRM-ISM postdoctoral fellow under the supervision of the Ibrahim Assem, Thomas Br\"ustle and Virginie Charette. He would like to thank Giovanni Cerulli Irelli for motivating the investigation of higher difference properties during his stay at the University of Padova in june 2009. This was the starting point of this work. He would also like to thank the rest of the algebra group of Padova for their kind hospitality. Finally, he would like to thank Bernhard Keller, Philippe Caldero and Frédéric Chapoton for interesting discussions on the topic.

% \bibliographystyle{alpha}
% \bibliography{../../../../biblio}

\newcommand{\etalchar}[1]{$^{#1}$}

\end{document}